\newcommand{\N}{\mathbb{N}}
\newtheorem{thm}{Theorem}[section]
\newtheorem{lem}[thm]{Lemma}
\theoremstyle{definition}
\theoremstyle{remark}
\newtheorem{defn}{Definition}[section]
\newtheorem{rem}[defn]{Remark}
\numberwithin{equation}{section}
\keywords{Convergence exponents, Gauss iterated function system, Hausdorff dimension}
\subjclass[2010]{11K55, 28A80}
\begin{document}

\title[Convergence exponents of the digits in Gauss like system]{Multifractal analysis of the convergence exponents for the digits in $d$-decaying Gauss like dynamical systems}

\author {Kunkun Song}
\address{Key Laboratory of Computing and Stochastic Mathematics (Ministry of Education),
School of Mathematics and Statistics,
Hunan Normal University, Changsha, 410081, P.R. China}
\email{songkunkun@hunnu.edu.cn}

\author{Mengjie Zhang$^{*}$}
\address{School of Mathematics and Statistics, Henan University of Science and Technology, 471023 Luoyang, P.R. China}
\email{mengjie\_zhang@haust.edu.cn}

\thanks {* Corresponding author}

\begin{abstract}
Let $\{a_n(x)\}_{n\geq1}$ be the sequence of digits of $x\in(0,1)$ in infinite iterated function systems with polynomial decay of the derivative. We first study the multifractal spectrum of the convergence exponent defined by the sequence of the digits $\{a_n(x)\}_{n\geq1}$ and the weighted products of distinct digits with finite numbers respectively, and then calculate the Hausdorff dimensions of the intersection of sets defined by the convergence exponent of the weighted product of distinct digits with finite numbers and sets of points whose digits are non-decreasing in such iterated function systems.
\end{abstract}

\maketitle

\section{Introduction}

The study of infinite iterated function systems (iIFS) on the unit interval including the classical Gauss infinite iterated function system, has always been a fundamental and important subject in ergodic theory and number theory. In the past several decades, there are many works in diverse directions concerning the dimension and measure theory in infinite iterated function systems because of the establishments of Mauldin and Urba\'{n}ski, who investigated the infinite conformal iterated function systems in \cite{Mau95,MU03,MU96}. In this note, we shall consider certain sets from the viewpoint of multifractal analysis in infinite iterated function systems with polynomial decay of the derivative, namely the \emph{$d$-decaying Gauss like iterated function systems}. Below we introduce the definitions and relevant results of such systems, then state our main results.

\subsection{$d$-decaying Gauss like iterated function systems}
Let $\{f_n\}_{n\geq1}$ be a sequence of $C^{1}$ functions with $f_n:[0,1]\rightarrow[0,1]$ satisfying the following.
\begin{itemize}
\item[(\romannumeral1)]\emph{Open set condition}: for any $i\neq j\in\N$,\ $f_i((0, 1))\cap f_j((0, 1))=\emptyset$;

\medskip

\item[(\romannumeral2)]\emph{Contraction property}: there exists $m\in \N$ and a real number $0<\rho<1$ such that for all $(a_1, . . . , a_m) \in \N^m$ and $x\in[0,1]$,
\[
0<|(f_{a_1}\circ\cdots\circ f_{a_m})'(x)|\leq\rho<1;
\]

\medskip

\item[(\romannumeral3)]\emph{Regular property}: there exists $d>1$ such that for any $\varepsilon>0$, we can find constants $C_1=C_1(\varepsilon),\ C_2=C_2(\varepsilon)>0$ such that for $i\in\N$ there exist constants $\xi_i,\lambda_i$ such that for all $x\in[0,1]$, we have
\[
\xi_i \leq |f_i'(x)| \leq \lambda_i\quad \text{and} \quad
\frac {C_1} {i^{d+\varepsilon}} \leq \xi_i \leq \lambda_i \leq \frac {C_2} {i^{d-\varepsilon}}.
\]
\end{itemize}
We call such system a \emph{$d$-decaying iIFS} as defined by Jordan and Rams \cite{JR}. Moreover, it is called \emph{Gauss-like} if the system in addition fulfils
\begin{itemize}
\item[(\romannumeral4)] $\bigcup_{i=1}^\infty f_i([0,1]) = [0,1]$, \text{and}\ $f_{i}(x)<f_{j}(x)$ whenever $i>j$;

\medskip

\item[(\romannumeral5)]\emph{Bounded distortion property (BDP)}: there exists a constant $\kappa\geq1$ such that for every $n\in\mathbb{N}$ and $(a_1, . . . , a_n)\in \N^n$ we have
\[
|(f_{a_1}\circ\cdots\circ f_{a_n})'(x)|\leq\kappa|(f_{a_1}\circ\cdots\circ f_{a_n})'(y)|
\]
for all $x,y\in[0,1]$.
\end{itemize}

\medskip

With the above definition, clearly, there is a natural projection $\Pi:\ \mathbb{N}^{\mathbb{N}}\rightarrow[0,1]$ defined as
\[
\Pi(\underline{a})=\lim\limits_{n\rightarrow\infty}f_{a_{1}}\circ\cdots\circ f_{a_{n}}(1)
\]
for all $\underline{a}=\{a_n\}_{n\geq1}\in\mathbb{N}^{\mathbb{N}}$. Then each $x\in\Pi(\mathbb{N}^{\mathbb{N}})$, which is the attractor of the iIFS $\{f_n\}_{n\geq1}$ corresponds to a sequence of integers $\{a_n\}_{n\geq1}$ in the sense that
\[
x=\lim\limits_{n\rightarrow\infty}f_{a_{1}}\circ\cdots\circ f_{a_{n}}(1).
\]
We call $\{a_n\}_{n\geq1}$ the digits of $x$. It should be pointed out that the sequence of digits of one point may not be unique. However, at most a countable number of points  can be ignored as the Hausdorff dimension is concerned, then there is a 1-to-1 correspondence between a real number in $[0,1]$ and a sequence of integers. When the sequence of digits is unique, we write $x=(a_1(x),a_2(x),\ldots)$ for the symbolic expansion of any point $x\in[0,1]$. It is known that there are several classical iIFS closely connected with number theory:
\begin{itemize}
\item[$\bullet$]Continued fraction system:
\[
f_n(x)=\frac{1}{x+n},\ \ x\in[0,1],\ n\in\mathbb{N}.
\]
The sequence of digits $\{a_n(x)\}_{n\geq1}$ is just the partial quotients of $x$ in its continued fraction expansion.

\smallskip

\item[$\bullet$]L\"{u}roth system:
\[
f_n(x)=\frac{x}{n(n+1)}+\frac{1}{n+1},\ \ x\in[0,1],\ n\in\mathbb{N}.
\]
Then $\{a_n(x)\}_{n\geq1}$ is just the sequence of digits in the L\"{u}roth series expansion of $x$.

\item[$\bullet$] Quadratic Gauss system:
\[
f_n(x)=\frac{1}{(x+n)^2},\ \ x\in[0,1],\ n\in\mathbb{N}.
\]
Here the sequence $\{a_n(x)\}_{n\geq1}$ is just the digits in the series expansion of $x$ induced by the quadratic Gauss map, which is a particular case of $f$-expansion defined by R\'{e}nyi \cite{R57}. For more details on the quadratic Gauss map, see \cite[Section 1]{GHST23}.
\end{itemize}
Both continued fraction system and L\"{u}roth system are special 2-decaying Gauss like iIFS, and the quadratic Gauss system is a special 3-decaying Gauss like iIFS.

\medskip

For any $(a_{1},\ldots,a_{n})\in\mathbb{N}^{n}$,
\[
I_{n}(a_{1},\ldots,a_{n})=f_{a_{1}}\circ\cdots\circ f_{a_{n}}([0,1])
\]
is called an \emph{$n$th-level cylinder} denoting the set of points in [0,1] whose symbolic expansions begin with $a_{1},\ldots,a_{n}$. Notice that the conditions (\romannumeral2) and (\romannumeral3) in the definition of the $d$-decaying Gauss like iIFS can be used to estimate the upper and lower bounds for these cylinders. In what follows, we always assume without loss of generality that $\varepsilon=0$ in (\romannumeral3) for simplicity. For the general case, in fact, it suffices to replace $d$ by $d+\varepsilon$ for the lower bound and by $d-\varepsilon$ for the upper bound, and then let $\varepsilon\rightarrow0$. Thus we have
\begin{equation}\label{lens}
C_{1}^{n}\prod\limits_{i=1}^{n}a_{i}^{-d}\leq|I_{n}(a_{1},\ldots a_{n})|\leq C_{2}^{n}\prod\limits_{i=1}^{n}a_{i}^{-d}.
\end{equation}

\medskip

Determining the fractal dimensions of such symbolic expansions that concerns the properties of digits has always been an important subject in the study of $d$-decaying Gauss like iIFS, see \cite{JR,CWW,LR21,Zhang20,GHST23} and references therein. Among them, Jordan and Rams \cite{JR} considered the dimension of the sets of points with strictly increasing digits in general $d$-decaying iIFS and obtained the following theorem.
\begin{thm}\label{JR thm}\textbf{\rm(\cite{JR})}
Let $\Phi:\mathbb{N}\rightarrow\mathbb{R}$ be a function such that $n\leq\Phi(n)\leq\beta n$ for some $\beta\geq1$, then
\[
\dim_{\rm H}\Pi\{\underline{a}:\ a_{n+1}>\Phi(a_n)\ \text{for all}\ n\in\mathbb{N}\}=1/d,
\]
where we use $\dim_{\rm H}$ to denote the Hausdorff dimension.
\end{thm}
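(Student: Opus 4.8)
The plan is to establish the two inequalities $\dim_{\rm H}E\le 1/d$ and $\dim_{\rm H}E\ge 1/d$ separately, where $E=\Pi\{\underline{a}:\ a_{n+1}>\Phi(a_n)\ \text{for all}\ n\}$. The two hypotheses on $\Phi$ play opposite roles. The lower bound $\Phi(n)\ge n$ forces $a_{n+1}>a_n$, so every admissible sequence is strictly increasing; this single fact drives the upper bound. The upper bound $\Phi(n)\le\beta n$ prevents the digits from being forced to grow faster than geometrically, and this is exactly what allows the lower bound to reach $1/d$ rather than something smaller.

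For the upper bound I would use the natural covering of $E$ by $n$th-level cylinders. Since admissible digits satisfy $a_1<a_2<\cdots<a_n$, the upper estimate in \eqref{lens} gives, for any $s>0$,
\[
\sum_{a_1<\cdots<a_n}|I_n(a_1,\ldots,a_n)|^s\le C_2^{ns}\sum_{a_1<\cdots<a_n}\prod_{i=1}^n a_i^{-ds}.
\]
Reading the right-hand side as the coefficients of a generating product identifies the total over $n$ with $\prod_{k\ge1}\bigl(1+C_2^s\,k^{-ds}\bigr)$, which converges precisely when $ds>1$, i.e. $s>1/d$; hence for such $s$ the general term tends to $0$. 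As the cylinder diameters tend to $0$ uniformly by the contraction property, this yields $\mathcal{H}^s(E)=0$ for every $s>1/d$, and letting $s\downarrow 1/d$ gives $\dim_{\rm H}E\le 1/d$. The exponentially growing constant $C_2^{ns}$ is harmless here, being absorbed into the convergence of the infinite product.

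For the lower bound I would build a Cantor-type subset $F\subseteq E$ carrying a mass distribution. Fixing a starting digit, I restrict each subsequent digit $a_{n+1}$ to range over the integers in $(\beta a_n,(\beta+1)a_n]$; since $\Phi(a_n)\le\beta a_n$, every such choice is admissible, there are $\asymp a_n$ of them, and all are of size $\asymp\beta a_n$. Along $F$ the digits then grow geometrically, so by \eqref{lens} a level-$n$ cylinder satisfies $-\log|I_n|\asymp d\sum_{i\le n}\log a_i$, while the number of level-$n$ cylinders has logarithm $\asymp\sum_{i\le n}\log a_i$; both quantities are of order $n^2$ with the same leading behaviour up to the factor $d$, the growth base cancelling. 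Spreading unit mass uniformly over the descendants, the resulting measure $\mu$ satisfies $\log\mu(I_n)/\log|I_n|\to 1/d$, and the mass distribution principle then yields $\dim_{\rm H}F\ge 1/d$.

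The main obstacle is the passage from cylinders to arbitrary balls in the lower bound: to invoke the mass distribution principle I must control $\mu(B(x,r))$ for all small $r$, not merely at the cylinder scales. This requires estimating the gaps separating the consecutive cylinders $I_{n+1}(a_1,\ldots,a_n,j)$ as $j$ ranges over the admissible block, and showing these gaps are comparable to the cylinder lengths, so that a ball of radius $r$ meets a controlled number of level-$(n+1)$ cylinders. Here the bounded distortion property, together with the regular property and the open set condition, are essential, and it is this gap analysis — rather than the dimension bookkeeping — that is the delicate technical step.
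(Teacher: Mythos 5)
The paper does not prove this statement: Theorem \ref{JR thm} is imported verbatim from Jordan--Rams \cite{JR} and used as a black box, so there is no internal proof to compare yours against. Judged on its own terms, your outline is the standard (and correct) argument. The upper bound is essentially complete: since $\Phi(n)\geq n$ forces strictly increasing digits, the level-$n$ sums are the elementary symmetric functions of the sequence $\{C_2^s k^{-ds}\}_{k\geq 1}$, whose total $\prod_{k\geq1}(1+C_2^s k^{-ds})$ converges exactly when $ds>1$, so the terms tend to $0$ and $\mathcal{H}^s(E)=0$ for $s>1/d$; together with the contraction property guaranteeing that the mesh tends to zero, this is a clean and valid argument. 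For the lower bound, your construction (children ranging over $(\beta a_n,(\beta+1)a_n]$, hence $\asymp a_n$ choices of size $\asymp \beta a_n$, with $\sum_{i\leq n}\log a_i\asymp n^2$ dominating all $O(n)$ error terms) gives the right local-dimension computation at cylinder scales, and you correctly identify the one step you have not carried out: converting the cylinder estimate into a ball estimate for the mass distribution principle. That step is genuinely needed and is where conditions (\romannumeral4) and (\romannumeral5) enter; note, however, that within this paper's own toolkit it is exactly what Lemma \ref{sbk} (quoted from \cite{LR21}) packages. Taking, say, $s_n=2(4\beta)^n$ and $r_n=(4\beta)^n$, one checks $s_{n+1}-r_{n+1}>\beta(s_n+r_n)\geq\Phi(a_n)$ for any admissible $a_n$, so $B(\{s_n\},\{r_n\},1)$ is contained in the set of the theorem, $\liminf_n (s_n-r_n)/s_n=1/2>0$, and the dimension formula of Lemma \ref{sbk} evaluates to $1/d$; this would let you close the lower bound without redoing the gap analysis by hand. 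So: right approach on both sides, upper bound done, lower bound reducible to a cited lemma rather than left as an acknowledged obstacle.
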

\noindent Besides, Jordan and Rams also showed that the dimension does drop below $1/d$ when the digits grow with exponential rates in the $d$-decaying Gauss like iIFS. After that, Cao, Wang and Wu \cite{CWW} investigated the digits of points which are further restricted to an infinite subset of positive integers in the iIFS with some general regular properties, including $d$-decaying Gauss like. They also obtained the dimensions of sets of points satisfying much larger growth rate of the digits. The Hausdorff dimension of sets of points with restricted slowly growing digits in such iIFS was studied shortly by Gonz\'{a}lez-Robert, Hussain, Shulga and Takahasi \cite{GHST23}. Recently, Liao and Rams \cite{LR21} concerned the increasing rate of Birkhoff sums in such systems and calculated the Hausdorff dimensions of sets of points whose Birkhoff sums share the same increasing rate for different unbounded potential functions. Later, Zhang \cite{Zhang20} studied the critical cases for the growth rate functions which are not discussed in \cite{LR21}, and pointed out that the corresponding Hausdorff dimension spectrum is right continuous.

\smallskip

Before proceeding, we shall state a fact that there exists a conformal measure (here is the $1$-dimensional Lebesgue measure) which is equivalent to the unique ergodic invariant measure in $d$-decaying Gauss like iIFS. To this end, in the following we shall recall the classical definitions of confomal iterated function systems and some relevant conclusions coming from \cite{HMU,Mau95,MU96}.

\subsection{Confomal iterated function systems and relevant conclusions}

 For convenience, we give some notations.
\begin{itemize}

\item[$\bullet$] $\phi_{\omega}:=\phi_{\omega_1}\circ\cdots\circ\phi_{\omega_n}$ for $\omega=(\omega_1,\ldots,\omega_n)\in\mathbb{N}^{n},\ n\geq1$.

\medskip

\item[$\bullet$] $\|\phi_{\omega}^{'}\|:=\sup_{x\in X}|\phi_{\omega}^{'}(x)|$ for $\omega\in\cup_{n\geq1}\mathbb{N}^{n}$.

\medskip

\item[$\bullet$] $C(X)$ denotes the space of continuous functions on $X$.

\medskip

\item[$\bullet$] Int$X$ and $\partial X$ denote the interior and the boundary of $X$, respectively.

\medskip

\item[$\bullet$] The $1$-dimensional Lebesgue measure is denoted by $\mathcal{L}$, and the $n$-dimensional Lebesgue measure by $\mathcal{L}^{n}$.
\end{itemize}

\medskip

An iterated function system $S=\{\phi_i\}_{i\in\mathbb{N}}$ is called a \emph{conformal iterated function system (c.i.f.s)} with seed set $X$, where $X$ is a nonempty compact connected subset of $\mathbb{R}^n$, if the following conditions are satisfied.
\begin{itemize}
\item[(1)]For each $i\in\mathbb{N}$, $\phi_i$ is an injective map of $X$ into $X$;

\smallskip

\item[(2)]The system $S$ is uniformly contractive on $X$, namely, there exists $0<s<1$, such that
\[
|\phi_i(x)-\phi_i(y)|\leq s|x-y|;
\]

\smallskip

\item[(3)]The open set condition is satisfied for Int$X$:
\[
\phi_i(\text{Int}X)\subseteq\text{Int}X\ \  \text{and}\ \ \phi_i(\text{Int}X)\cap\phi_j(\text{Int}X)=\emptyset,\ i,j\in\mathbb{N},i\neq j;
\]

\smallskip

\item[(4)]There exists an open connected set $V$ with $X\subseteq V\subseteq \mathbb{R}^n$ such that each $\phi_i$, $i\in\mathbb{N}$, extends to $C^1$ conformal diffeomorphism of $V$ into $V$;

\smallskip

\item[(5)]\emph{Cone condition}: there exists $\gamma,l$ such that for every $x\in\partial X\subseteq\mathbb{R}^n$, there is an open cone Con$(x,\gamma,l)\subset$ Int$X$ with vertex $x$, central angle of Lebesgue measure $\gamma$ and altitude $l$;

\smallskip

\item[(6)]The BDP is satisfied for $S$: there is a $K\geq1$ such that
\[
|\phi_{\omega}^{'}(y)|\leq K|\phi_{\omega}^{'}(x)|
\]
for every $\omega\in\cup_{n\geq1}\mathbb{N}^n$ and every pair of points $x,y\in V$.
\end{itemize}

\begin{lem}
The $d$-decaying Gauss like iIFS is a c.i.f.s.
\end{lem}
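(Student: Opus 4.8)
The plan is to take the seed set $X=[0,1]\subset\R$ with generators $\phi_i=f_i$ and to verify the six defining properties of a c.i.f.s.\ in turn, reducing each to the hypotheses (\romannumeral1)--(\romannumeral5); since the ambient dimension is $1$, several of them become automatic. For injectivity (1): by (\romannumeral3) we have $|f_i'|\geq\xi_i>0$ on $[0,1]$, so the $C^1$ map $f_i$ is strictly monotone, hence injective, and it maps $[0,1]$ into $[0,1]$ by hypothesis. For the open set condition (3): the disjointness of the interior images is exactly (\romannumeral1), while $f_i((0,1))\subseteq(0,1)$ is automatic, because $f_i((0,1))$ is an open subinterval of $[0,1]$ and every open subinterval of $[0,1]$ lies in $(0,1)$. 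The cone condition (5) is trivial for $X=[0,1]$, whose boundary is $\{0,1\}$: the one-sided intervals $(0,l)$ and $(1-l,1)$ with $l<1$ serve as the required cones inside $\mathrm{Int}X=(0,1)$. Finally the bounded distortion property (6) is precisely (\romannumeral5) on $[0,1]$, which transfers to the neighbourhood $V$ built below after enlarging the constant.

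Next I would treat the conformal-diffeomorphism condition (4). In $\R$ conformality is vacuous, since a $C^1$ injection with non-vanishing derivative is trivially angle preserving; the only substantive point is to extend each $f_i$ from $[0,1]$ to a $C^1$ diffeomorphism of a fixed open connected neighbourhood $V=(-\eta,1+\eta)$ of $[0,1]$ into $V$. For the classical systems the $f_i$ are already real-analytic on a neighbourhood of $[0,1]$; in general a standard $C^1$ extension respecting the derivative bounds of (\romannumeral3) works, and one then shrinks $\eta$ so that the images stay inside $V$.

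The main obstacle is the uniform contraction (2). By (\romannumeral3) one has $\|f_i'\|\leq\lambda_i\leq C_2/i^{d}$, so there is an $i_0$ with $\lambda_i<1$ for all $i\geq i_0$, and $f_i$ is a genuine contraction for all but finitely many $i$. For small indices, however, a single generator need not contract: in the continued fraction system $\|f_1'\|=\sup_{x\in[0,1]}(x+1)^{-2}=1$, attained at the boundary point $x=0$, so no uniform $s<1$ works letter by letter. This is exactly the situation hypothesis (\romannumeral2) is designed for: every word of length $m$ satisfies $\|(f_{a_1}\circ\cdots\circ f_{a_m})'\|\leq\rho<1$, so, grouping an arbitrary word into blocks of length $m$ and applying the chain rule together with (\romannumeral5), the cylinder diameters decay geometrically, $|I_n|\leq C\rho^{\lfloor n/m\rfloor}\to0$. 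To match the letter-by-letter contraction demanded in (2) verbatim, I would pass to the $m$-th iterated system $\{f_\omega:\ \omega\in\N^m\}$, which has the same limit set $\Pi(\N^\N)$, inherits (1) and (3)--(6), and is uniformly contractive with ratio $\rho$. Since the thermodynamic formalism of Mauldin--Urba\'{n}ski and the resulting conformal and invariant measures are unchanged under this passage, the $d$-decaying Gauss like iIFS is a c.i.f.s.\ in the required sense, which is what we invoke in the sequel.
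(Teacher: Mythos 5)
Your verification is correct, and it is in fact more careful than the paper's own proof on one substantive point. The paper simply declares conditions (1)--(4) and (6) obvious for $X=[0,1]$, $V=(-1,2)$, and spends its only real effort on the cone condition (5), which it replaces by the weaker Lebesgue--density condition of Graf--Mauldin--Williams; you instead exhibit the one--sided cones directly, which is equally valid in dimension one. The genuine divergence is your treatment of the uniform contraction axiom (2): you correctly observe that hypothesis (\romannumeral2) only controls compositions of length $m$, and that for the continued fraction system $\|f_1'\|=1$, so no single uniform ratio $s<1$ works letter by letter; the paper's ``obviously satisfied'' glosses over exactly this. Your fix --- passing to the $m$-th iterated system $\{f_\omega:\omega\in\N^m\}$, which has the same limit set, inherits the remaining axioms, and is uniformly contractive with ratio $\rho$ --- is the standard remedy, and the transfer of the pressure function, the conformal measure and the invariant measure under this passage is routine (the pressure gets multiplied by $m$, so its zero, and hence the conformal exponent and Lemma 1.2, are unaffected). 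The only places where you are still informal are the $C^1$ extension of each $f_i$ to $V$ with images inside $V$ and the extension of the BDP constant to $V$, but the paper is no more explicit there, and both are standard. In short: same overall strategy (verify the axioms for $X=[0,1]$), but you repair a real gap at axiom (2) at the modest cost of replacing the system by its $m$-th iterate.
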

\begin{proof}
By the definition of $d$-decaying Gauss like iIFS, the conditions (1)-(4), (6) are obviously satisfied for $S=\{f_i\}_{i\in\mathbb{N}}$ if we take $X=[0,1]$ and $V=(-1,2)$. As for the condition (5), since the Cone condition can be replaced with a weaker condition (see \cite[p.72]{GMW} and \cite[Section 2]{MU96})
\begin{equation}\label{Cone condition}
\inf\limits_{x\in\partial X}\inf\limits_{0<r<1}\mathcal{L}^{n}(B(x,r)\cap\text{Int}X)/\mathcal{L}^{n}(B(x,r))>0,
\end{equation}
where $B(x,r)$ denotes the open ball with center $x$ and radius $r$. In our setting, the condition (\ref{Cone condition}) is clearly satisfied when we let $X=[0,1]$, thus $\mathcal{L}^{n}=\mathcal{L}$, Int$X=(0,1)$ and $\partial X=\{0,1\}$.
\end{proof}

The topological pressure function $P$ for the c.i.f.s $S=\{\phi_i\}_{i\in\mathbb{N}}$ is defined as
\[
P(t):=\lim\limits_{n\rightarrow\infty}\frac{1}{n}\log\sum\limits_{\omega\in\mathbb{N}^n}\|\phi_{\omega}^{'}\|^t.
\]
Detailed properties of this pressure function can be found in \cite{Mau95,MU96,MU03}. As shown in \cite{Mau95}, there are two disjoint classes of c.i.f.s, regular and irregular. A system is called regular if there exists $t\geq0$ such that $P(t)=0$, otherwise the other.

\smallskip

Let $J$ be the attractor of the c.i.f.s $S=\{\phi_i\}_{i\in\mathbb{N}}$, that is $J=\bigcup_{i\in\mathbb{N}}\phi_i(J)$. A Borel probability measure $m$ is said to be $t$-conformal if it is supported on the set $J$, and for every Borel set $A\subseteq X$ and $i,j\in\mathbb{N},\ i\neq j$,
\[
m(\phi_i(A))=\int_{A}|\phi_i^{'}|^tdm \ \ \text{and}\ \ m(\phi_i(X)\cap\phi_j(X))=0.
\]
It is shown in \cite{Mau95} that the c.i.f.s is regular if and only if there exists a $t$-conformal measure ($t$ is such that $P(t)=0$), and then $t=\dim_{\rm{H}}J$.

\medskip

The following lemma provides a way to determine the conformal measure.
\begin{lem} [{\cite[Theorem 7.5]{Mau95}},{\cite[Theorem 4.5]{MU96}}]\label{conformal measure}
If $S$ is a regular c.i.f.s, and $\mathcal{L}(\text{Int}X\setminus X_1)>0$, where $X_1=\bigcup_{i\in\mathbb{N}}\phi_i(X)$, then $\dim_{\rm{H}}J<n$. Conversely, if $\mathcal{L}(X\setminus X_1)=0$, then $S$ is regular, $\mathcal{L}(J)=\mathcal{L}(X)$, and $\mathcal{L}/\mathcal{L}(X)$ is the conformal measure.
\end{lem}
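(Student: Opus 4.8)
The plan is to exploit the fact that, since each $\phi_i$ is conformal, the ambient $n$-dimensional Lebesgue measure (which is what $\mathcal{L}$ denotes here, i.e. $\mathcal{L}^n$) is the \emph{canonical candidate} for an $n$-conformal measure, so that the whole lemma reduces to comparing its support with the attractor $J$. Conformality gives $|\det D\phi_i(x)|=|\phi_i'(x)|^n$, whence the change-of-variables formula yields, for every Borel $A\subseteq X$ and every $i\in\N$,
\begin{equation}\label{eq:confjac}
\mathcal{L}^n(\phi_i(A))=\int_A|\phi_i'|^n\,d\mathcal{L}^n.
\end{equation}
Thus $m:=\mathcal{L}^n/\mathcal{L}^n(X)$ automatically satisfies the defining identity of a $t$-conformal measure \emph{with $t=n$}; the only thing that can fail is that $m$ need not be carried by $J$. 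I would turn statements about $m$ into statements about regularity and dimension through the criterion recalled above: a c.i.f.s is regular if and only if a $t$-conformal measure exists, in which case $t=\dim_{\rm H}J$.

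For the converse implication I would first upgrade the hypothesis $\mathcal{L}^n(X\setminus X_1)=0$ to all levels. Writing $X_k=\bigcup_{\omega\in\N^k}\phi_\omega(X)$, so that $X_{k+1}=\bigcup_i\phi_i(X_k)$, the elementary inclusion $\bigcup_iA_i\setminus\bigcup_iB_i\subseteq\bigcup_i(A_i\setminus B_i)$ together with \eqref{eq:confjac} gives
\[
\mathcal{L}^n(X_1\setminus X_{k+1})\le\sum_i\mathcal{L}^n\big(\phi_i(X\setminus X_k)\big)=\sum_i\int_{X\setminus X_k}|\phi_i'|^n\,d\mathcal{L}^n,
\]
which vanishes once $\mathcal{L}^n(X\setminus X_k)=0$; an induction then yields $\mathcal{L}^n(X\setminus X_k)=0$ for every $k$. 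Since the $X_k$ decrease and $J\subseteq\bigcap_kX_k$, letting $k\to\infty$ gives $\mathcal{L}^n(X\setminus\bigcap_kX_k)=0$, and identifying $\bigcap_kX_k$ with $J$ up to an $\mathcal{L}^n$-null set of boundary overlaps (guaranteed by the open set condition and the contraction $\operatorname{diam}\phi_\omega(X)\to0$) gives $\mathcal{L}^n(X\setminus J)=0$, i.e. $\mathcal{L}^n(J)=\mathcal{L}^n(X)$. Hence $m$ is a Borel probability measure carried by $J$ which, by \eqref{eq:confjac}, is $n$-conformal; the criterion then forces $S$ to be regular with $\dim_{\rm H}J=n$, and identifies $\mathcal{L}^n/\mathcal{L}^n(X)$ as \emph{the} conformal measure.

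For the forward implication I would argue by contradiction, ruling out $\dim_{\rm H}J=n$ (the case $\dim_{\rm H}J>n$ being impossible in $\mathbb{R}^n$). If $S$ is regular with $\dim_{\rm H}J=n$ then $P(n)=0$. The bridge to Lebesgue measure is the bounded distortion property together with \eqref{eq:confjac}: summing $\mathcal{L}^n(\phi_\omega(X))=\int_X|\phi_\omega'|^n\,d\mathcal{L}^n$ over $\omega\in\N^k$ and using the open set condition gives, with constants depending only on $K$ and $\mathcal{L}^n(X)$ but not on $k$,
\[
\sum_{\omega\in\N^k}\|\phi_\omega'\|^n\asymp\mathcal{L}^n(X_k),\qquad\text{so}\qquad P(n)=\lim_{k\to\infty}\tfrac1k\log\mathcal{L}^n(X_k).
\]
Now the persistent hole produces an exponential deficit: since $X_{k+1}=\bigcup_{\omega\in\N^k}\phi_\omega(X_1)$ and $J\subseteq X_1$, the set $X_k\setminus X_{k+1}$ contains, disjointly modulo $\mathcal{L}^n$, the copies $\phi_\omega(\operatorname{Int}X\setminus X_1)$, whence \eqref{eq:confjac} and bounded distortion give $\mathcal{L}^n(X_k\setminus X_{k+1})\ge c\,\mathcal{L}^n(X_k)$ with $c:=\mathcal{L}^n(\operatorname{Int}X\setminus X_1)/\big(K^n\mathcal{L}^n(X)\big)>0$. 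Therefore $\mathcal{L}^n(X_{k+1})\le(1-c)\mathcal{L}^n(X_k)$, so $\mathcal{L}^n(X_k)$ decays exponentially and $P(n)\le\log(1-c)<0$, contradicting $P(n)=0$. Hence $\dim_{\rm H}J<n$.

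The genuinely load-bearing ingredient is the Jacobian identity \eqref{eq:confjac}: it is exactly what singles out the exponent $t=n$ and couples the abstract conformal measure to Lebesgue measure, and everything else is bookkeeping around it. The main technical nuisances I would expect are the measure-zero identifications — that $\bigcap_kX_k$ and $J$ differ by an $\mathcal{L}^n$-null set, and that the relevant unions are disjoint modulo $\mathcal{L}^n$ — both of which are standard consequences of the open set condition and contraction in the Mauldin--Urba\'nski framework; obtaining the comparability $\sum_{\omega\in\N^k}\|\phi_\omega'\|^n\asymp\mathcal{L}^n(X_k)$ with constants \emph{uniform in $k$} is likewise the point where the bounded distortion property must be invoked carefully. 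These are the steps I would write out in full, the remaining estimates being routine.
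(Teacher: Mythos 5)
The paper offers no proof of this lemma at all: it is imported verbatim from \cite[Theorem 7.5]{Mau95} and \cite[Theorem 4.5]{MU96}, so there is no internal argument to compare against. What you have written is essentially a reconstruction of the original Mauldin--Urba\'nski proof, and it is sound. The Jacobian identity $\mathcal{L}^n(\phi_i(A))=\int_A|\phi_i'|^n\,d\mathcal{L}^n$ is indeed the pivot in both directions: in the converse it exhibits $\mathcal{L}^n/\mathcal{L}^n(X)$ as an $n$-conformal measure once the support issue is settled, and in the forward direction it converts the pressure at $t=n$ into the volumes $\mathcal{L}^n(X_k)$, after which the persistent hole forces geometric decay and hence $P(n)<0$, which is incompatible with regularity at dimension $n$ since $P$ is non-increasing. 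Two points that you defer as ``standard'' should be made explicit. First, every ``modulo $\mathcal{L}^n$-null'' identification you use --- the essential disjointness of the sets $\phi_\omega(\text{Int}X)$, the requirement $m(\phi_i(X)\cap\phi_j(X))=0$ in the definition of a conformal measure, and the identification of $\bigcap_k X_k$ with $J$ --- ultimately rests on $\mathcal{L}^n(\partial X)=0$, which is precisely what the cone condition (5) supplies and should be invoked there; for an infinite alphabet the passage from $\bigcap_k X_k$ to $J$ also needs the consistency of the level-$k$ coding words, which holds off the countable union of null boundary images by the open set condition, and then uniform contraction pins down the point of $J$. Second, your comparability $\sum_{\omega\in\N^k}\|\phi_\omega'\|^n\asymp\mathcal{L}^n(X_k)$ in fact yields the a priori bound $\sum_{\omega\in\N^k}\|\phi_\omega'\|^n\leq K^n$ for every $k$, so the quantities defining $P(n)$ are finite and $P(n)\leq0$ automatically; recording this disposes of the otherwise unaddressed possibility $P(n)=+\infty$. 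With these details written out the argument is complete and matches the cited source in substance.
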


For the regular system, there are at least two ways for deriving an ergodic invariant measure $m^{\ast}$ which is equivalent to the $t$-conformal measure. One of the methods relies on using Banach limits (see \cite[Theorems 8.1 and 8.2]{Mau95} for more details), and the other one depends on applying the Frobenius-Perron operator $L=L_t:\ C(X)\rightarrow C(X)$ defined as follows:
\[
L(f)(x)=\sum\limits_{i\in\mathbb{N}}|\phi_i^{'}(x)|^tf(\phi_i(x)).
\]
\begin{lem}[{\cite[Theorem 8.3]{Mau95}}]\label{existence}
For $m$-a.e. $x\in X$, $\lim\limits_{n\rightarrow\infty}L^n(1)(x)=g(x)$ exists and $g=dm^{\ast}/dm$. In particular, for $m$-a.e. $x\in X$,
\[
g(x)=L(g)(x)=\sum\limits_{i\in\mathbb{N}}|\phi_i^{'}(x)|^tg(\phi_i(x)).
\]

\end{lem}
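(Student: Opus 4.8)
The plan is to exploit the duality between the Frobenius--Perron operator $L=L_t$ and the $t$-conformal measure $m$, and then to upgrade this duality to genuine convergence of the iterates $L^n(1)$ by contracting a cone of positive functions. First I would record the transfer identity
\[
\int L(h)\,f\,dm=\int h\,(f\circ T)\,dm,
\]
where $T$ is the left shift realised on the attractor (so $T=\phi_i^{-1}$ on $\phi_i(X)$). This follows by summing over branches the change-of-variables formula $\int_{\phi_i(X)}\psi\,dm=\int_X(\psi\circ\phi_i)\,|\phi_i'|^t\,dm$, which is precisely the defining property of the $t$-conformal measure, the null overlaps $m(\phi_i(X)\cap\phi_j(X))=0$ ensuring that no mass is double counted. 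Taking $h\equiv1$ gives $\int L(f)\,dm=\int f\,dm$, so that $L^{\ast}m=m$ and, in particular, $\int L^n(1)\,dm=1$ for every $n\ge1$.

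Next I would extract uniform control of the iterates from the bounded distortion property. Since $L^n(1)(x)=\sum_{\omega\in\mathbb{N}^n}|\phi_\omega'(x)|^t$, applying the BDP constant $K$ termwise yields $K^{-t}L^n(1)(y)\le L^n(1)(x)\le K^{t}L^n(1)(y)$ for all $x,y\in X$; combined with the normalisation $\int L^n(1)\,dm=1$ and $m(X)=1$, this produces the two-sided bound $K^{-t}\le L^n(1)\le K^{t}$, uniformly in $n$. Thus every $L^n(1)$ lies in the cone of positive functions with value-ratio at most $K^{t}$, and the family is uniformly bounded and bounded away from $0$.

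The crux is to promote these bounds to convergence of the \emph{entire} sequence. Here I would work in a Birkhoff cone of positive log-H\"older functions, using the conformal bounded distortion estimates to show that $L$ leaves such a cone invariant and maps it into a subcone of finite diameter for the Hilbert projective metric: the contraction of the branches, $\|\phi_\omega'\|\le\rho$ along blocks, tames the regularity constant while the distortion bounds keep it finite despite the infinite alphabet. Birkhoff's theorem then makes $L$ a strict contraction in the Hilbert metric, so the normalised iterates converge geometrically to a unique fixed direction $g$. Because $\int L^n(1)\,dm=1$ is constant while $\int g\,dm>0$, the corresponding eigenvalue equals $1$, whence $L^n(1)\to g$ (uniformly on $X$, hence $m$-a.e.), and continuity of $L$ on $C(X)$ forces $L(g)=g$. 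I expect this cone step to be the principal obstacle: establishing the invariance and finite Hilbert diameter of the cone is exactly where the infinitude of the branches must be absorbed, and it is what distinguishes the full pointwise statement from the weaker Ces\`aro or subsequential convergence obtainable from Arzel\`a--Ascoli or Banach limits.

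Finally I would identify the limit. Since $L(g)=g$ and $\int g\,dm=\lim_{n}\int L^n(1)\,dm=1$, the measure $dm^{\ast}=g\,dm$ is a probability measure equivalent to $m$; the transfer identity gives $\int (f\circ T)\,dm^{\ast}=\int(Lg)\,f\,dm=\int f\,dm^{\ast}$, so $m^{\ast}$ is $T$-invariant, and the uniqueness of the fixed direction in the cone yields both its ergodicity and the identification $g=dm^{\ast}/dm$.
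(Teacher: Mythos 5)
This lemma is not proved in the paper at all: it is quoted verbatim from \cite[Theorem 8.3]{Mau95}, so there is no internal argument to compare yours against. Judged on its own terms, your first two steps are correct and cleanly done: the duality $\int L(h)f\,dm=\int h\,(f\circ T)\,dm$ does follow from the conformality of $m$ branch by branch (though to get $\int L(h)\,dm=\int h\,dm$ you should set $f\equiv1$, not $h\equiv1$), and the termwise application of the BDP together with $\int L^n(1)\,dm=1$ does give $K^{-t}\le L^n(1)\le K^t$ uniformly in $n$.

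The genuine gap is exactly where you flag it: the Birkhoff cone step is announced but not executed, and it is the entire content of the lemma. Uniform two-sided bounds on $L^n(1)$ give weak-* subsequential limits (essentially the Banach-limit construction the paper mentions as the alternative route to $m^{\ast}$), but by themselves they cannot yield convergence of the whole sequence --- a sequence pinched between $K^{-t}$ and $K^{t}$ need not converge anywhere. To make the projective contraction work you must exhibit a cone that $L$ maps into a subcone of finite Hilbert diameter, and for that the ratio-form BDP listed in the paper is not enough: you need a quantitative modulus of continuity for $\log|\phi_\omega'|$ (a uniform H\"older/Dini condition) to get invariance of a log-H\"older cone, plus the summability $\sum_i\|\phi_i'\|^t<\infty$ to control the infinite alphabet; neither is established in your sketch, and the subsequent claims (uniqueness of the fixed direction, ergodicity of $m^{\ast}$, identification $g=dm^{\ast}/dm$) all lean on this missing step. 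Note also that you are proving something strictly stronger than the statement --- uniform geometric convergence rather than $m$-a.e.\ convergence --- which is a warning sign that the cone hypotheses you would need exceed the standing assumptions; Mauldin's own proof obtains only the a.e.\ statement for general regular conformal systems by a different (measure-theoretic) route. As it stands, the proposal is a correct reduction of the problem to a known hard step, not a proof of the lemma.
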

Based on Lemmas \ref{conformal measure} and \ref{existence}, we immediate obtain the following results.
\begin{thm}\label{dcds}
In $d$-decaying Gauss like iIFS, the $t$-conformal measure $m$ is $1$-dimensional Lebesgue measure $\mathcal{L}$ on $[0,1]$. Moreover, there exists an ergodic invariant probability measure $m^{\ast}$ equivalent to the Lebesgue measure $\mathcal{L}$.
\end{thm}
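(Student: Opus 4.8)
The plan is to check the single measure-theoretic hypothesis of the converse half of Lemma~\ref{conformal measure} and then hand the resulting regularity to Lemma~\ref{existence}. Throughout I would use the conformal structure $S=\{f_i\}_{i\in\mathbb{N}}$ on $X=[0,1]$ furnished by the preceding lemma, so that $n=1$ and the ambient Lebesgue measure $\mathcal{L}^n$ coincides with $\mathcal{L}$.

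First I would compute $X_1=\bigcup_{i\in\mathbb{N}}f_i(X)$. The Gauss-like condition (iv) states precisely that $\bigcup_{i=1}^{\infty}f_i([0,1])=[0,1]$, whence $X_1=[0,1]=X$ and therefore $\mathcal{L}(X\setminus X_1)=0$. This is exactly the hypothesis of the converse direction of Lemma~\ref{conformal measure}, which then delivers in one stroke that $S$ is regular, that $\mathcal{L}(J)=\mathcal{L}(X)=1$, and that $\mathcal{L}/\mathcal{L}(X)=\mathcal{L}$ is the conformal measure; thus the $t$-conformal measure is $m=\mathcal{L}$. To identify the exponent, I would observe that a set of positive $\mathcal{L}$-measure in $\mathbb{R}$ has Hausdorff dimension $1$, so $t=\dim_{\rm H}J=1$; equivalently, the defining relation $m(f_i(A))=\int_A|f_i'|^t\,dm$ reduces to the one-dimensional change-of-variables formula exactly when $t=1$.

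Second, knowing $S$ is regular I would apply Lemma~\ref{existence} to the Frobenius--Perron operator $L=L_1$. It yields a density $g=dm^{\ast}/dm$ obtained as the limit $\lim_{n\to\infty}L^n(1)$ and fixed by $g=L(g)$; the measure $m^{\ast}=g\,dm$ is then an invariant probability measure absolutely continuous with respect to $m=\mathcal{L}$, and its ergodicity is part of the Mauldin--Urba\'{n}ski theory for regular conformal systems.

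The point that I expect to demand the most care is promoting ``absolutely continuous'' to ``equivalent'', i.e.\ establishing also $m\ll m^{\ast}$ so that $m^{\ast}$ and $\mathcal{L}$ have the same null sets. This is where the bounded distortion property (v) is essential: the BDP makes the iterates $L^n(1)$ uniformly comparable over $X$, producing two-sided bounds $0<c\le g\le C<\infty$ on the density and hence the genuine equivalence of $m^{\ast}$ and $\mathcal{L}$. Assembling these three ingredients completes the proof.
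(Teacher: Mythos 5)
Your proposal is correct and follows essentially the same route as the paper, which derives the theorem directly from the converse direction of Lemma~\ref{conformal measure} (using condition (\romannumeral4) to get $\mathcal{L}(X\setminus X_1)=0$) together with Lemma~\ref{existence}. The extra details you supply---identifying $t=1$ and using the BDP to bound the density $g$ away from $0$ and $\infty$ so that absolute continuity upgrades to equivalence---are exactly what the cited Mauldin--Urba\'{n}ski results deliver, and the paper simply leaves them implicit.
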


\subsection{Statements of results}

Much attention has been paid to the convergence exponent for kinds of sequences in number theory and multifractal analysis of various dynamical systems in the past decades. For more details, we refer to \cite{BT,Dod,FLMW10,FMSW21,FMSY24,WW08} and references therein. This paper is mainly concerned with the  multifractal properties of sets concerning the convergence exponent of the sequence of digits in $d$-decaying Gauss like iIFS. The convergence exponent (see P\'{o}lya and Szeg\H{o} \cite[p.26]{PS72}) of the sequence of digits $\{a_n(x)\}_{n\geq1}$  for each $x\in(0,1)$ in $d$-decaying Gauss like iIFS is defined by
\begin{equation}\label{t1}
\tau_1(x):=\inf\left\{s\geq0:\ \sum\limits_{n\geq1}a_n^{-s}(x)<\infty\right\}.
\end{equation}
Applying Birkhoff Ergodic Theorem to $d$-decaying Gauss like iIFS, we deduce from (\ref{lens}) and Theorem \ref{dcds} that for $\mathcal{L}$-a.e $x \in (0,1)$,
\begin{align}\label{erg}
\nonumber\lim\limits_{n\to\infty}\sqrt[n]{a_1(x)\cdots a_n(x)}&=e^{\int_{0}^{1}\log a_1(x)dm^\ast}\\
&\approx e^{\sum\limits_{k\geq1}\int_{I_1(k)}\log a_1(x)dx}\approx e^{\sum\limits_{k\geq1}\frac{\log k}{k^d}}.
\end{align}
Here we write $a\approx b$ for $1/C\leq a/b\leq C$ where $C$ is an absolute constant. Then it is immediate that infinitely many
of $a_n(x)$ is less than some positive constant for $\mathcal{L}$-a.e $x \in (0,1)$. This shows that $\tau_1(x)=\infty$ for $\mathcal{L}$-a.e $x \in (0,1)$. Therefore, it is natural to investigate the sizes of such Lebesgue null sets from the viewpoint of multifractal analysis, that is, the Hausdorff dimension of
\[
E(\alpha):=\left\{x\in(0,1):\ \tau_1(x)=\alpha\right\}, \ \ 0\leq\alpha<\infty.
\]
\begin{thm}\label{dg}
For any $0\leq\alpha<\infty$, we have
\[
\dim_{\rm H}E(\alpha)=1/d.
\]
\end{thm}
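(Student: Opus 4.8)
The plan is to establish the two matching bounds $\dim_{\rm H}E(\alpha)\le 1/d$ and $\dim_{\rm H}E(\alpha)\ge 1/d$ by quite different means. The key preliminary observation, which handles the upper bound uniformly in $\alpha$, is the inclusion
\[
E(\alpha)\subseteq F:=\{x\in(0,1):\ a_n(x)\to\infty\ \text{as}\ n\to\infty\}
\]
valid for every finite $\alpha$. Indeed, the set $\{s\ge0:\sum_n a_n^{-s}(x)<\infty\}$ is an up-set (digits are integers $\ge1$), so if $\tau_1(x)=\alpha<\infty$ then for any fixed $s>\alpha$ the series $\sum_n a_n^{-s}(x)$ converges; this forces $a_n^{-s}(x)\to0$, hence $a_n(x)\to\infty$. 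Thus the upper bound reduces to showing $\dim_{\rm H}F\le 1/d$, a statement that no longer involves $\alpha$; the lower bound, by contrast, will come from an $\alpha$-dependent Cantor construction.

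For the upper bound I would fix $s=1/d+\varepsilon$ and cover $F$ by cylinders. For each fixed $M$ one has $F\subseteq\bigcup_{N\ge1}\{x:\ a_n(x)\ge M\ \text{for all}\ n\ge N\}$, so it suffices to bound the dimension of each set in this countable union. Using (\ref{lens}) we have $|I_n|^s\le C_2^{ns}\prod_{i}a_i^{-ds}$, and summing over level-$n$ cylinders whose first $N-1$ digits are free and whose remaining digits satisfy $a_i\ge M$ yields
\[
\sum_{\text{admissible}}|I_n|^{s}\le C_2^{ns}\Big(\sum_{k\ge1}k^{-ds}\Big)^{N-1}\Big(\sum_{k\ge M}k^{-ds}\Big)^{n-N+1}.
\]
Since $ds=1+d\varepsilon>1$, the tail $\sum_{k\ge M}k^{-ds}$ tends to $0$ as $M\to\infty$, so for $M=M(\varepsilon)$ large enough we have $C_2^{s}\sum_{k\ge M}k^{-ds}<1$ and the right-hand side decays geometrically in $n$ (the diameters tend to $0$ by the contraction property (\romannumeral2)). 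Hence $\mathcal{H}^{s}$ vanishes on each set, giving $\dim_{\rm H}F\le 1/d+\varepsilon$, and letting $\varepsilon\to0$ finishes the upper bound.

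For the lower bound I would fix $\alpha$ and a reference sequence: take $m_n=\lceil n^{1/\alpha}\rceil$ when $0<\alpha<\infty$ and $m_n=\lceil e^{\sqrt n}\rceil$ when $\alpha=0$, and set
\[
E_\alpha:=\{x:\ m_n\le a_n(x)\le 2m_n\ \text{for all}\ n\ge1\}.
\]
Because $a_n\asymp m_n$ on $E_\alpha$, the series $\sum_n a_n^{-s}(x)$ and $\sum_n m_n^{-s}$ converge together, and a direct check shows their common convergence exponent is exactly $\alpha$ (for $\alpha>0$ since $\sum n^{-s/\alpha}<\infty\iff s>\alpha$, and for $\alpha=0$ since $\sum e^{-s\sqrt n}<\infty$ for every $s>0$). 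Thus $E_\alpha\subseteq E(\alpha)$, and it remains to prove $\dim_{\rm H}E_\alpha=1/d$. On $E_\alpha$ I would place the uniform mass distribution $\mu$ assigning each admissible level-$n$ cylinder the weight $\mu(I_n)=\prod_{i=1}^n(m_i+1)^{-1}\asymp\prod_{i=1}^n m_i^{-1}$. Together with $|I_n|\asymp\prod_{i=1}^n m_i^{-d}$ from (\ref{lens}), the cylinder ratio satisfies
\[
\frac{\log\mu(I_n)}{\log|I_n|}\longrightarrow\frac{\sum_i\log m_i}{d\sum_i\log m_i}=\frac1d.
\]

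The main obstacle is upgrading this cylinder estimate to a ball estimate in the mass distribution principle, namely bounding $\mu(B(x,r))$ for radii in the intermediate range $|I_n(x)|\le r<|I_{n-1}(x)|$. Here one must count how many level-$n$ sub-cylinders of the common parent $I_{n-1}(x)$ a ball of radius $r$ can meet, which requires controlling both the lengths $|I_n(\ldots,k)|\asymp|I_{n-1}(x)|\,k^{-d}$ and the gaps between consecutive sub-cylinders; these follow from the regular property (\romannumeral3), the ordering in (\romannumeral4) and the bounded distortion (\romannumeral5). The delicate point is that these intermediate scales must \emph{not} lower the exponent below $1/d$, and this is precisely where the sub-exponential growth of $m_n$ (polynomial for $\alpha>0$, and $e^{\sqrt n}$ for $\alpha=0$) enters, in contrast to exponential or doubly-exponential growth where the dimension would genuinely drop. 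Once one verifies $\mu(B(x,r))\le C\,r^{1/d-\varepsilon}$ for all small $r$, the mass distribution principle gives $\dim_{\rm H}E_\alpha\ge 1/d-\varepsilon$; letting $\varepsilon\to0$ completes the lower bound and hence the theorem.
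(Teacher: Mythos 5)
Your overall strategy coincides with the paper's: the upper bound is obtained exactly as in the paper by observing $E(\alpha)\subseteq\{x:a_n(x)\to\infty\}$ and running a covering argument with $s=1/d+\varepsilon$ and $M$ large (this is the paper's Lemma \ref{GT}), and the lower bound uses the same family of Cantor-type subsets $\{x: m_n\le a_n(x)\le 2m_n\}$ with $m_n$ polynomial for $\alpha>0$ and superpolynomially (but subexponentially growing) for $\alpha=0$. The one substantive divergence is how the dimension of that Cantor set is computed: the paper simply invokes Lemma \ref{sbk} (Liao--Rams, \cite[Lemma 2.3]{LR21}), which gives $\dim_{\rm H}B(\{s_n\},\{r_n\},1)$ in closed form and evaluates to $1/d$ for your choices, whereas you propose to re-derive this via the mass distribution principle.

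As written, that re-derivation is incomplete: you correctly identify the decisive step --- the estimate $\mu(B(x,r))\le Cr^{1/d-\varepsilon}$ for intermediate radii $|I_n(x)|\le r<|I_{n-1}(x)|$ --- but you only describe what would be needed (counting the level-$n$ children of $I_{n-1}(x)$ that $B(x,r)$ can meet, using the length asymptotics $|I_n(\ldots,k)|\asymp|I_{n-1}(x)|k^{-d}$ and the ordering/gap structure from (\romannumeral3)--(\romannumeral5)) without carrying it out. This is not a routine verification: since the admissible digits at level $n$ range over an interval of length $\asymp m_n\to\infty$, a ball can meet many children of very different sizes, and one must check that the resulting count does not degrade the exponent; this is precisely the content of the cited lemma. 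So either quote Lemma \ref{sbk} (in which case your proof becomes the paper's proof) or supply the ball-counting computation in full; without one of these the lower bound is not yet established.
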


\medskip

As we know, the multifractal properties of sets relevant to the growth rate of the product of two consecutive digits has become an emerging and vital subject in view of the  pioneering work of Kleinbock and Wadleigh \cite{KW18}, who considered the improvability of Dirichlet's theorem  in metric Diophantine approximation. In recent years, there are abundant relevant results in continued fraction systems, see \cite{BF,FX,HHY,HW,HWX,HKWW,SZ2024} for more details. Motivated by this, we also wonder the multifractal properties of the convergence exponent which are relevant to the growth rate of weighted products of distinct digits with finite numbers in $d$-decaying Gauss like iIFS. To be precise,  we first consider the convergence exponent defined as
\begin{equation}\label{t2}
\tau_2(x):=\inf\left\{s\geq0:\ \sum\limits_{n\geq1}\left(a_n^{t_0}(x)\cdots a_{n+m}^{t_m}(x) \right)^{-s}<\infty\right\},
\end{equation}
where $m\geq1$ and the weights $\{t_i\}_{0\leq i\leq m}$ is a sequence of non-negative real numbers. Without loss of generality, we assume  that $t_0\neq0$ and at least one $t_i\neq0\ (1\leq i\leq m)$ here and in the sequel. Notice that the assumption is valid. Indeed, for the case when the weights $\{t_i\}_{0\leq i\leq m}$ is a sequence of positive real numbers, the convergence exponent $\tau_2(x)$ is defined by the weighted products of consecutive digits, while for some $t_i=0\ (1\leq i\leq m)$, it is defined by the weighted products of lacunary digits.
It follows from \eqref{erg} that $\tau_2(x)=\infty$ for $\mathcal{L}$-a.e $x \in (0,1)$. Then we are also interested in the Hausdorff dimension of Lebesgue null sets
\[
E(\alpha,\{t_i\}_{0\leq i\leq m}):=\left\{x\in(0,1):\ \tau_2(x)=\alpha\right\}, \ \ 0\leq\alpha<\infty.
\]
\begin{thm}\label{ddg}
For any $0\leq\alpha<\infty$, we have
\[
\dim_{\rm H}E(\alpha,\{t_i\}_{0\leq i\leq m})=1/d.
\]
\end{thm}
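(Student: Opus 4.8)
The plan is to prove the two matching bounds $\dim_{\rm H}E(\alpha,\{t_i\}_{0\le i\le m})\le 1/d$ and $\ge 1/d$ separately, using the cylinder geometry \eqref{lens} and the conformality from Theorem \ref{dcds}. Throughout write $P_n(x)=a_n^{t_0}(x)\cdots a_{n+m}^{t_m}(x)$ and $T=\sum_{i=0}^m t_i>0$ (positivity because $t_0\ne 0$). The observation that drives everything is that a finite convergence exponent forces the products to blow up: if $\tau_2(x)=\alpha<\infty$, then for any $s>\alpha$ the series $\sum_n P_n^{-s}(x)$ converges, so its terms tend to $0$ and $P_n(x)\to\infty$. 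Taking logarithms and summing, the identity $\sum_{n=1}^N\log P_n(x)=\sum_{k}c_k\log a_k(x)$, in which every digit $a_k$ with $m+1\le k\le N$ carries the full weight $c_k=T$ and only $O(m)$ boundary digits carry a smaller (still non-negative) weight, shows $\frac1N\sum_{k=1}^N\log a_k(x)\to\infty$. Hence
\[
E(\alpha,\{t_i\}_{0\le i\le m})\subseteq G:=\Big\{x\in(0,1):\ \tfrac1N\textstyle\sum_{k=1}^N\log a_k(x)\to\infty\Big\},
\]
and it suffices to prove $\dim_{\rm H}G\le 1/d$ for the upper bound.

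For the upper bound I would cover $G$ by cylinders and run a Legendre-transform (large deviation) estimate. Fix $s>1/d$ and $M>0$; since every $x\in G$ satisfies $\sum_{k=1}^N\log a_k(x)\ge MN$ for all large $N$, the set $G$ is covered, at each large level $N$, by the cylinders $I_N(a_1,\dots,a_N)$ with $\sum_{k=1}^N\log a_k\ge MN$. Using \eqref{lens} and the elementary bound $\mathbf 1\{\sum\log a_k\ge MN\}\le e^{\gamma(\sum\log a_k-MN)}$ valid for any $\gamma>0$, one gets
\[
\sum_{\sum\log a_k\ge MN}|I_N|^{s}\le C_2^{sN}e^{-\gamma MN}\Big(\sum_{a\ge1}a^{\gamma-ds}\Big)^{N}.
\]
For $s>1/d$ we may pick $\gamma\in(0,ds-1)$ so that the inner series is a finite constant $Z=Z(s,\gamma)$, and then for $M$ large enough the whole bound equals $(C_2^{s}Ze^{-\gamma M})^{N}\to 0$. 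Thus the $s$-dimensional Hausdorff measure of $G$ vanishes, giving $\dim_{\rm H}G\le s$; letting $s\downarrow 1/d$ yields $\dim_{\rm H}G\le 1/d$.

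For the lower bound I would exhibit, for each fixed $\alpha$, a Cantor subset $F\subseteq E(\alpha,\{t_i\}_{0\le i\le m})$ of dimension $1/d$. Choose a slowly increasing template $\ell_n$ and let the digits range freely over $a_n\in\{\ell_n,\ell_n+1,\dots,2\ell_n-1\}$, so there are $\asymp\ell_n$ choices of size $\asymp\ell_n$ at each step and $P_n\asymp\prod_{i=0}^m\ell_{n+i}^{t_i}$. For $0<\alpha<\infty$ take $\ell_n=\lfloor n^{1/(\alpha T)}\rfloor$, giving $P_n\asymp n^{1/\alpha}$ and hence $\sum_nP_n^{-s}<\infty\iff s>\alpha$, i.e. $\tau_2\equiv\alpha$ on $F$; for $\alpha=0$ take $\ell_n=\lfloor e^{\sqrt n}\rfloor$, giving $\log P_n\asymp T\sqrt n$ and $\sum_nP_n^{-s}<\infty$ for every $s>0$, i.e. $\tau_2\equiv0$ on $F$. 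Placing the Bernoulli-type measure $\mu$ on $F$ that splits the mass equally among the $\asymp\ell_n$ branches at level $n$, the estimates $\mu(I_n)\asymp\prod_{k\le n}\ell_k^{-1}$ and $|I_n|\asymp\prod_{k\le n}\ell_k^{-d}$ give local exponent $\log\mu(I_n)/\log|I_n|\to 1/d$, and the mass distribution principle then yields $\dim_{\rm H}F\ge 1/d$.

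The main obstacle is not either bound in isolation but the control of the non-monotone, overlapping products $P_n$: because each digit is shared by $m+1$ consecutive products, the passage between the convergence exponent and a Birkhoff-type growth condition on the digits (the inclusion $E\subseteq G$ and the sharpness of the template growth) must be carried out carefully, and for the lower bound one must verify that the chosen $\ell_n$ grow slowly enough---$\log\ell_{n+1}=o\big(\sum_{k\le n}\log\ell_k\big)$---that a Frostman ball of radius $r$ meets comparably few level-$n$ cylinders, so that the local exponent $1/d$ survives when passing from cylinders to arbitrary balls. This non-autonomous dimension computation, together with checking that the $O(m)$ boundary digits spoil neither the exponent $\alpha$ nor the value $1/d$, is where the bulk of the technical effort lies.
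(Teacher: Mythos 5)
Your proposal is correct and follows essentially the same route as the paper: the upper bound is obtained by covering a larger ``digits blow up'' set with an exponential-moment estimate over cylinders (the paper does this via $E_2=\{x:\ a_n(x)\cdots a_{n+m}(x)\to\infty\}$ and Lemma \ref{LC}, which is the same Chernoff-type computation as your bound for $G$), and the lower bound by a Cantor set whose $n$th digit is confined to a window of size comparable to $n^{1/(\alpha\Sigma_t)}$ (superpolynomial for $\alpha=0$). The only difference is cosmetic: for that Cantor set the paper simply invokes the Liao--Rams formula (Lemma \ref{sbk}) instead of redoing the mass-distribution argument you outline, which is exactly the content of that lemma.
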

\medskip

Compared with the results in Theorems \ref{dg} and \ref{ddg}, we know that the Hausdorff dimensions of the corresponding level sets do not vary by changing the number of the digits and the weights. Thus, it is natural to consider the cases when the Hausdorff dimensions of the above level sets are rely on the weights $\{t_i\}_{0\leq i\leq m}$. Note that if the sequence of digits $\{a_n(x)\}_{n\geq1}$ is non-decreasing, then the sequence $\{a^{t_0}_n(x)\cdots a^{t_m}_{n+m}(x)\}_{n\geq1}$ is also non-decreasing, by a result of \cite[Lemma 2.1]{WW08},
the convergence exponent defined by \eqref{t2} can be written as
\begin{equation}\label{tauzj}
\tau_{2}(x)=\limsup\limits_{n\to\infty}\frac{\log n}{\log\big(a_n^{t_0}(x)\cdots a_{n+m}^{t_m}(x)\big)}.
\end{equation}
In the following we continue to study the multifractal spectrum of $\tau_{2}(x)$ defined by \eqref{tauzj},
i.e., the Hausdorff dimension of the intersection of sets $E(\beta,\{t_i\}_{0\leq i\leq m})$
and $\Lambda$ for any $0\leq\beta\leq\infty$,
where
\[\Lambda=\big\{x\in(0,1):\ a_{n}(x)\leq a_{n+1}(x),\ \forall\ n\geq1\big\}.\]
It can be concluded by Theorem \ref{JR thm} that $\dim_{\rm H}\Lambda=1/d$. Let $\alpha=1/\beta$. Then we deduce from \eqref{tauzj} that
\begin{align*}
E(\Lambda,\alpha,\{t_i\}_{0\leq i\leq m})&:=E(\beta,\{t_i\}_{0\leq i\leq m})\cap\Lambda\\
&=\left\{x\in\Lambda: \liminf\limits_{n\to\infty}\frac{\log\left(a_n^{t_0}(x)\cdots a_{n+m}^{t_m}(x)\right)}{\log n}=\alpha\right\}.
\end{align*}
In the following we shall show that the Hausdorff dimension of the level sets $E(\Lambda,\alpha,\{t_i\}_{0\leq i\leq m})$ are closely depended on the level $\alpha$ and the weights $\{t_i\}_{0\leq i\leq m}$. Now we are in a position to state the results.
\begin{thm}\label{jqdg}
Let $\Sigma_t=\sum_{0\leq i\leq m}t_i$. Then for any $0\leq\alpha\leq\infty$, we have
\begin{equation*}
\dim_{\rm H}E(\Lambda,\alpha,\{t_i\}_{0\leq i\leq m})=
\begin{cases}
0,\ \ \ \ \ \ \ \ \ \ \ \ 0\leq\alpha<\Sigma_t,\cr
\frac{\alpha-\Sigma_t}{d\alpha},\ \ \ \ \ \ \ \Sigma_t\leq\alpha<\infty;\cr
\frac{1}{d},\ \ \ \ \ \ \ \ \ \ \ \ \ \ \ \ \ \ \alpha=\infty.
\end{cases}
\end{equation*}
\end{thm}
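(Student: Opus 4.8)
The plan is to reduce the statement to the polynomial growth rate of a single digit, and then to prove matching upper and lower bounds for the Hausdorff dimension of the resulting level set, treating $\alpha=\infty$ separately.

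\textbf{Reduction.} On $\Lambda$ the digits are non-decreasing and the weights are non-negative, so $a_{n+i}(x)\ge a_n(x)$ gives
\[
a_n^{\Sigma_t}(x)\le a_n^{t_0}(x)\cdots a_{n+m}^{t_m}(x)\le a_{n+m}^{\Sigma_t}(x).
\]
Dividing logarithms by $\log n$ and using $\log(n+m)/\log n\to1$, both bounds have the same lower limit $\Sigma_t\,\gamma$, where $\gamma:=\liminf_n\log a_n(x)/\log n$, whence $\liminf_n\log\big(a_n^{t_0}(x)\cdots a_{n+m}^{t_m}(x)\big)/\log n=\Sigma_t\,\gamma$. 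Writing $\gamma=\alpha/\Sigma_t$, it follows that $E(\Lambda,\alpha,\{t_i\}_{0\le i\le m})=\{x\in\Lambda:\ \liminf_n\log a_n(x)/\log n=\gamma\}$, and the three regimes $\alpha<\Sigma_t$, $\Sigma_t\le\alpha<\infty$, $\alpha=\infty$ become $\gamma<1$, $1\le\gamma<\infty$, $\gamma=\infty$. Thus it suffices to show the dimension is $0$ for $\gamma\le1$, equals $(\gamma-1)/(d\gamma)$ for $1\le\gamma<\infty$, and equals $1/d$ for $\gamma=\infty$.

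\textbf{Upper bound.} Fix $x$ in the level set and $\epsilon>0$; then $a_i(x)\ge i^{\gamma-\epsilon}$ for all large $i$, while $a_n(x)\le n^{\gamma+\epsilon}$ for infinitely many $n$. By monotonicity, at each such $n$ every digit of the prefix obeys $i^{\gamma-\epsilon}\le a_i\le n^{\gamma+\epsilon}$, so for every $M$ the level set is covered by the $n$-cylinders ($n\ge M$) satisfying these constraints. Using \eqref{lens}, the $s$-sum of these cylinders at level $n$ is at most
\[
C_2^{ns}\binom{\lceil n^{\gamma+\epsilon}\rceil+n-1}{n}\,(n!)^{-ds(\gamma-\epsilon)},
\]
where $\prod a_i^{-ds}$ has been replaced by its maximal value $(n!)^{-ds(\gamma-\epsilon)}$ times the number of admissible sequences. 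For $\gamma\ge1$ the logarithm of this bound is $(1+o(1))\,n\log n\,[(\gamma+\epsilon-1)-ds(\gamma-\epsilon)]$, which is negative once $s>(\gamma+\epsilon-1)/(d(\gamma-\epsilon))$; summing over $n$ and letting $\epsilon\to0$ gives $\dim_{\rm H}\le(\gamma-1)/(d\gamma)$. For $\gamma<1$ one chooses $\epsilon$ with $\gamma+\epsilon<1$, so that the binomial factor is only $e^{o(n\log n)}$ while $(n!)^{-ds(\gamma-\epsilon)}$ forces convergence for every $s>0$, giving dimension $0$; in the degenerate case $\gamma=0$, where no effective lower bound on the digits is available, one instead uses the geometric bound $|I_n|\le\rho^{\lfloor n/m\rfloor}$ from the contraction property, which dominates the subexponentially many admissible cylinders.

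\textbf{Lower bound and $\gamma=\infty$.} For $1<\gamma<\infty$ it is enough, by the upper bound, to construct a subset of dimension $(\gamma-1)/(d\gamma)$. Let $F$ consist of the points whose digits are chosen freely and independently in the narrow bands $a_k\in[k^\gamma,\,k^\gamma+k^{\gamma-1}]$; since $(k+1)^\gamma>k^\gamma+k^{\gamma-1}$ for large $k$, such sequences are automatically increasing, lie in $\Lambda$, satisfy $\log a_k/\log k\to\gamma$, and hence belong to the level set. Each level-$k$ cylinder then has $L_k$ children with $L_k$ of order $k^{\gamma-1}$, so the uniform independent measure $\mu$ satisfies $\log\mu(I_n)=-(1+o(1))(\gamma-1)n\log n$, while \eqref{lens} gives $\log|I_n|=-(1+o(1))d\gamma\,n\log n$, whence the cylinder ratio tends to $(\gamma-1)/(d\gamma)$. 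To convert this into a dimension bound I would pass from cylinders to balls: for $|I_{n+1}|\le r\le|I_n|$ a ball $B(x,r)$ meets about $k$ of the $L_k$ equally weighted children of $I_n$, with $k$ of order $r/|I_{n+1}|$, so $\mu(B(x,r))$ is of order $k\,\mu(I_{n+1})$ and a direct computation gives $\mu(B(x,r))\le r^{(\gamma-1)/(d\gamma)}\,n^{O(1)}$; since $-\log r$ is of order $d\gamma\,n\log n$, the polynomial factor is negligible and the mass distribution principle yields $\dim_{\rm H}F\ge(\gamma-1)/(d\gamma)$. Finally, for $\gamma=\infty$ one has $\dim_{\rm H}\le\dim_{\rm H}\Lambda=1/d$, while Theorem \ref{JR thm} with $\Phi(n)=\beta n$ produces a subset of $\Lambda$ on which $a_{n+1}>\beta a_n$, so that $\log a_n/\log n\to\infty$; this subset lies in the level set and has dimension $1/d$, giving the matching lower bound.

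\textbf{Main obstacle.} The crux is the ball estimate in the lower bound. In this infinite system each cylinder has infinitely many children whose sizes decay like $a_{k+1}^{-d}$, so bounding $\mu(B(x,r))$ demands precise control of how many neighbouring children of comparable size a ball of radius $r$ can meet and of the gaps separating them; this is exactly where \eqref{lens} and the bounded distortion property must be invoked, and where one must verify that the non-decreasing constraint does not spoil the product structure of $\mu$. The decisive point is that the correct number of admissible children is of order $k^{\gamma-1}$ rather than $k^{\gamma}$, and it is this count that pins the dimension at $(\gamma-1)/(d\gamma)$ rather than at $1/d$.
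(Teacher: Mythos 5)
Your proof is correct, and its skeleton coincides with the paper's: the upper bound comes from covering by cylinders of non-decreasing digits trapped between $i^{\gamma-\epsilon}$ and $n^{\gamma+\epsilon}$ and counting them with the stars-and-bars formula (Lemma \ref{card} plus Stirling), and the lower bound comes from a Cantor set of digits confined to polynomial bands. There are two genuine differences of route. First, your opening reduction --- squeezing $a_n^{\Sigma_t}(x)\le a_n^{t_0}(x)\cdots a_{n+m}^{t_m}(x)\le a_{n+m}^{\Sigma_t}(x)$ on $\Lambda$ so that the level set becomes $\{x\in\Lambda:\ \liminf_n\log a_n(x)/\log n=\alpha/\Sigma_t\}$ --- is only implicit in the paper (inside the definitions of the covering families $\mathcal{D}_k$ and $B_N$); stating it up front disposes of the weights once and for all, and your use of the contraction bound $|I_n|\le\rho^{\lfloor n/m\rfloor}$ in the regime $\gamma<1$ is cleaner than the paper's corresponding length estimate there. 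Second, for the lower bound the paper does not prove any ball estimate: it invokes Lemma \ref{sbk} (Liao--Rams), and your set $\{a_k\in[k^\gamma,\,k^\gamma+k^{\gamma-1}]\}$ is exactly an instance of $B(\{s_n\},\{r_n\},1)$ with $s_n=n^\gamma+\tfrac12 n^{\gamma-1}$ and $r_n=\tfrac12 n^{\gamma-1}$, whose dimension formula evaluates to $(\gamma-1)/(d\gamma)=(\alpha-\Sigma_t)/(d\alpha)$. So the single step you leave as a sketch --- controlling how many comparably-sized adjacent children a ball of radius $r$ can meet, which you rightly identify as the crux --- is precisely what that cited lemma packages; either quote it or carry out the mass-distribution argument in full. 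Minor housekeeping: the ``$a_i\ge i^{\gamma-\epsilon}$ for all large $i$'' constraint requires a countable decomposition over the starting index $N$ before taking covers (the paper's $\sup_N$), and in the $\alpha=\infty$ lower bound you need $\beta>1$ in Theorem \ref{JR thm} so that $a_{n+1}>\beta a_n$ forces $\log a_n/\log n\to\infty$.
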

Replacing the lower limit by limit or upper limit in the set $E(\Lambda,\alpha,\{t_i\}_{0\leq i\leq m})$, we are also interested in the Hausdorff dimension of the sets
\[
F(\Lambda,\alpha,\{t_i\}_{0\leq i\leq m})=\left\{x\in\Lambda: \lim\limits_{n\to\infty}\frac{\log\left(a_n^{t_0}(x)\cdots a_{n+m}^{t_m}(x)\right)}{\log n}=\alpha\right\}
\]
and
\[
G(\Lambda,\alpha,\{t_i\}_{0\leq i\leq m})=\left\{x\in\Lambda: \limsup\limits_{n\to\infty}\frac{\log\left(a_n^{t_0}(x)\cdots a_{n+m}^{t_m}(x)\right)}{\log n}=\alpha\right\}.
\]
\begin{thm}\label{supp}
For any $0\leq\alpha\leq\infty$, we have
\[
\dim_{\rm H}F(\Lambda,\alpha,\{t_i\}_{0\leq i\leq m})=\dim_{\rm H}G(\Lambda,\alpha,\{t_i\}_{0\leq i\leq m})=\dim_{\rm H}E(\Lambda,\alpha,\{t_i\}_{0\leq i\leq m}).
\]
\end{thm}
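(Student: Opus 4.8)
The plan is to deduce Theorem \ref{supp} from Theorem \ref{jqdg} by exploiting the inclusions between the three sets together with one additional covering estimate. Throughout write $b_n(x)=\log\big(a_n^{t_0}(x)\cdots a_{n+m}^{t_m}(x)\big)/\log n$ and let $s(\alpha)$ denote the dimension value appearing in Theorem \ref{jqdg}. Since the existence of $\lim_n b_n(x)=\alpha$ forces $\liminf_n b_n(x)=\limsup_n b_n(x)=\alpha$, we have $F(\Lambda,\alpha,\{t_i\})=E(\Lambda,\alpha,\{t_i\})\cap G(\Lambda,\alpha,\{t_i\})$, and in particular $F\subseteq E$ and $F\subseteq G$. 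Hence $\dim_{\rm H}F\le\min\{\dim_{\rm H}E,\dim_{\rm H}G\}$, and by Theorem \ref{jqdg} the inequality $\dim_{\rm H}F\le\dim_{\rm H}E=s(\alpha)$ is immediate.

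First I would establish the common lower bound $\dim_{\rm H}F\ge s(\alpha)$ by observing that the Cantor set built for the lower bound in Theorem \ref{jqdg} can be arranged to lie inside $F$. The decisive point is that, for $x\in\Lambda$, the dimension-maximizing behavior is the steady growth $a_n(x)\asymp n^{\alpha/\Sigma_t}$: because the digits are non-decreasing, consecutive digits are comparable under polynomial growth, so $a_n^{t_0}\cdots a_{n+m}^{t_m}\asymp n^{\alpha}$ and therefore $b_n(x)\to\alpha$ along the whole construction, not merely along a subsequence. (Bursty sequences that realize $\liminf_n b_n=\alpha$ with large $\limsup$ necessarily contain long constant stretches, which carry no branching and so cannot raise the dimension.) Thus the lower-bound Cantor set lies in $F$, giving $\dim_{\rm H}F\ge s(\alpha)$, whence $\dim_{\rm H}F=s(\alpha)$ and also $\dim_{\rm H}G\ge\dim_{\rm H}F=s(\alpha)$.

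The remaining, and main, task is the upper bound $\dim_{\rm H}G\le s(\alpha)$. Fix $\epsilon>0$. Since $\limsup_n b_n(x)=\alpha$ yields $b_n(x)\le\alpha+\epsilon$ for all large $n$, and since $a_n^{\Sigma_t}\le a_n^{t_0}\cdots a_{n+m}^{t_m}$ for $x\in\Lambda$, every $x\in G$ satisfies $a_n(x)\le n^{(\alpha+\epsilon)/\Sigma_t}$ eventually. Therefore
\[
G(\Lambda,\alpha,\{t_i\})\subseteq\bigcup_{N\ge1}\Big\{x\in\Lambda:\ a_n(x)\le n^{(\alpha+\epsilon)/\Sigma_t}\ \text{for all}\ n\ge N\Big\}.
\]
I would then cover each set on the right by its $n$th-level cylinders and, using the length estimate \eqref{lens}, bound the Hausdorff sum $\sum\prod_i a_i^{-ds}$ over non-decreasing strings with $a_i\le i^{(\alpha+\epsilon)/\Sigma_t}$; this is precisely the covering computation already performed for the upper bound in Theorem \ref{jqdg}, with the eventual upper constraint on the digits in place of the eventual lower one, and it gives $\dim_{\rm H}G\le s(\alpha+\epsilon)$. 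Letting $\epsilon\to0$ and using the continuity of $\alpha\mapsto s(\alpha)$ yields $\dim_{\rm H}G\le s(\alpha)$, so that $\dim_{\rm H}G=s(\alpha)=\dim_{\rm H}F=\dim_{\rm H}E$.

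The boundary ranges are then disposed of along the same lines: for $0\le\alpha<\Sigma_t$ one has $\dim_{\rm H}G\le s(\alpha+\epsilon)=0$ as soon as $\alpha+\epsilon<\Sigma_t$, while for $\alpha=\infty$ the upper bound $\dim_{\rm H}G\le\dim_{\rm H}\Lambda=1/d$ coming from Theorem \ref{JR thm} is matched by a construction inside $F$ with $b_n\to\infty$ and dimension approaching $1/d$. The step I expect to be the main obstacle is exactly the upper bound for $G$: unlike $F$ it is not contained in $E$, so it requires its own covering, and the subtle point is to verify that counting non-decreasing strings subject only to an eventual upper bound on the digits produces the same critical exponent $s(\alpha+\epsilon)$ as the liminf set — that is, that the saturated growth $a_n\asymp n^{(\alpha+\epsilon)/\Sigma_t}$ is genuinely dimension-maximal among all admissible strings.
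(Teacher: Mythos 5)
Your reduction of the theorem to an upper bound for $G(\Lambda,\alpha,\{t_i\}_{0\leq i\leq m})$, and your treatment of the lower bounds (the Cantor sets $B(\{s_n\},\{r_n\},1)$ built for Theorem \ref{jqdg} have $a_n\asymp n^{\alpha/\Sigma_t}$, hence lie in $F\subseteq G$), coincide with the paper and are fine. The gap is in the one step you yourself flag as the main obstacle: the upper bound for $\dim_{\rm H}G$ when $\Sigma_t\leq\alpha<\infty$. You pass to the set of $x\in\Lambda$ with $a_n(x)\leq n^{(\alpha+\epsilon)/\Sigma_t}$ eventually and assert that "precisely the covering computation already performed for the upper bound in Theorem \ref{jqdg}, with the eventual upper constraint in place of the eventual lower one," gives $s(\alpha+\epsilon)$. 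It does not transfer. That computation needs \emph{both} constraints simultaneously: the upper constraint controls the cardinality via \eqref{ges2} (giving roughly $(k!)^{\frac{\alpha+\varepsilon}{\Sigma_t}-1}$ strings, which is super-exponential when $\alpha>\Sigma_t$), while the lower constraint $a_j\geq(j-m)^{\frac{\alpha-\varepsilon}{\Sigma_t}}$ is what makes every cylinder as small as \eqref{ik2}, namely of order $((k-m)!)^{-d\frac{\alpha-\varepsilon}{\Sigma_t}}$; only the ratio of these two super-exponential quantities is summable at the right $s$. With only the upper constraint, the admissible family still contains strings with many small digits (e.g.\ long initial constant blocks), whose cylinders shrink only exponentially, so the count-times-maximal-length bound diverges for every $s>0$. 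Proving that the "saturated" strings $a_i\asymp i^{(\alpha+\epsilon)/\Sigma_t}$ are dimension-maximal among all non-decreasing strings subject only to an eventual upper bound is exactly the missing content, not a routine verification.

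The paper closes this gap by a different device: it uses $G(\Lambda,\alpha,\{t_i\})\subseteq\{x\in\Lambda:\liminf_n b_n(x)\leq\alpha\}$, splits off the piece with $\liminf<\Sigma_t$ (dimension $0$ by Remark \ref{45}), and decomposes the remainder into finitely many bands $\{\alpha_1\leq\liminf b_n\leq\alpha_2\}$. On each band the two-sided covering of Theorem \ref{jqdg} applies verbatim and yields $\frac{\alpha_2-\Sigma_t}{d\alpha_1}$ (Remark \ref{55}); refining the partition of $[\Sigma_t,\alpha]$ and letting the mesh go to zero (Lemma \ref{Key}) squeezes the bound down to $\frac{\alpha-\Sigma_t}{d\alpha}$. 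If you want to salvage your direct covering, you would effectively have to reproduce this stratification by the local growth exponent anyway; as written, your argument for $\dim_{\rm H}G\leq s(\alpha+\epsilon)$ is not established.
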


\medskip

Throughout this paper, we use $\mathcal{H}^{s}$ to denote the $s$-dimensional Hausdorff measure of a set, $\lfloor x\rfloor$ the largest integer not exceeding $x$ and $\sharp$ the cardinality of a set, respectively. The paper is organized as follows. In section 2, we collect and establish some dimensional results in $d$-decaying Gauss like iIFS. Section 3 is devoted to the proofs of the main results.

\smallskip

\section{Some useful lemmas}
In this section, we present some useful lemmas for calculating the Hausdorff dimension of certain sets in $d$-decaying Gauss like iIFS. The first lemma is to deal with the Hausdorff dimension of Good's type sets \cite[Theorem 1]{Good41} of points whose digits tend to infinity without any restriction. Let
\[E_1=\big\{x\in(0,1):\ a_n(x)\to\infty\ \text{as}\ n\to\infty\big\}.\]
\begin{lem}\label{GT}
Let $\{a_n(x)\}_{n\geq1}$ be the digits in $d$-decaying Gauss like iIFS.
Then we have $\dim_{\rm H}E_1=1/d$.
\end{lem}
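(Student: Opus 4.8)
The plan is to prove the two inequalities $\dim_{\rm H}E_1\ge 1/d$ and $\dim_{\rm H}E_1\le 1/d$ separately, invoking Theorem \ref{JR thm} for the lower bound and the cylinder length estimate \eqref{lens} for the upper bound. The lower bound is the quicker of the two. I would observe that any point whose digits are strictly increasing automatically satisfies $a_n(x)\to\infty$, since a strictly increasing sequence of positive integers diverges; hence
\[
\Pi\{\underline{a}:\ a_{n+1}>a_n\ \text{for all}\ n\in\mathbb{N}\}\subseteq E_1.
\]
Theorem \ref{JR thm} applies to the left-hand set with the admissible choice $\Phi(n)=n$, for which the hypothesis $n\le\Phi(n)\le\beta n$ holds with $\beta=1$, and yields that this set has Hausdorff dimension exactly $1/d$. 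By monotonicity of Hausdorff dimension, $\dim_{\rm H}E_1\ge 1/d$.

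For the upper bound I would first unwind the definition of $E_1$ into a usable decomposition: for every fixed $N\in\mathbb{N}$,
\[
E_1\subseteq\bigcup_{n_0\ge1}B_{N,n_0},\qquad B_{N,n_0}:=\{x\in(0,1):\ a_n(x)\ge N\ \text{for all}\ n\ge n_0\}.
\]
Writing $A_N:=B_{N,1}$, each $B_{N,n_0}$ is a countable union over the free prefix $(a_1,\dots,a_{n_0-1})$ of the images $f_{a_1}\circ\cdots\circ f_{a_{n_0-1}}(A_N)$. By the regular property, each branch derivative is bounded above and below on $[0,1]$, and together with the BDP this makes the composition maps bi-Lipschitz onto their images, so they preserve Hausdorff dimension. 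Using countable stability, $\dim_{\rm H}B_{N,n_0}=\dim_{\rm H}A_N$, and therefore $\dim_{\rm H}E_1\le\dim_{\rm H}A_N$ for every $N$.

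It then remains to estimate $\dim_{\rm H}A_N$. Covering $A_N$ by the level-$n$ cylinders all of whose digits are $\ge N$ and applying the right-hand inequality in \eqref{lens}, for any $s>0$ I would bound
\[
\sum_{a_1,\dots,a_n\ge N}|I_n(a_1,\dots,a_n)|^s\le C_2^{ns}\Bigl(\sum_{a\ge N}a^{-ds}\Bigr)^n.
\]
Since $\sum_{a\ge N}a^{-ds}$ converges precisely when $ds>1$, i.e.\ $s>1/d$, for each fixed $s>1/d$ the tail $\sum_{a\ge N}a^{-ds}$ tends to $0$ as $N\to\infty$, so one may choose $N$ with $C_2^s\sum_{a\ge N}a^{-ds}<1$. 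The contraction property forces the level-$n$ cylinders to shrink, so this cover is admissible at all small scales, and the displayed sum tends to $0$ as $n\to\infty$; hence $\mathcal{H}^s(A_N)=0$ and $\dim_{\rm H}A_N\le s$. Combining with the previous paragraph gives $\dim_{\rm H}E_1\le s$ for every $s>1/d$, and letting $s\downarrow1/d$ yields $\dim_{\rm H}E_1\le 1/d$.

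The two bounds together give $\dim_{\rm H}E_1=1/d$. The only genuinely delicate point is the upper-bound bookkeeping: one must decompose $E_1$ so that a single convergence threshold $s>1/d$ controls all the pieces simultaneously, and must use the bounded distortion property to guarantee that the infinitely many free prefixes do not inflate the dimension. The lower bound, by contrast, is essentially immediate once one notices the inclusion of the strictly-increasing-digit set into $E_1$ and applies the already available Theorem \ref{JR thm}.
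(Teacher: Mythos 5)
Your proof is correct and follows essentially the same route as the paper: the lower bound via Theorem \ref{JR thm} applied to the strictly-increasing-digit set, and the upper bound by covering the ``all digits eventually $\geq N$'' sets with level-$n$ cylinders, using \eqref{lens} to reduce to the convergence of $\sum_{a\geq N}a^{-ds}$ for $s>1/d$ and choosing $N$ so that the resulting geometric factor is below $1$. The only cosmetic difference is how the unconstrained initial block of digits is handled (you pass to bi-Lipschitz images of $A_N$, the paper keeps the prefix fixed inside $F_M(N)$ and carries the finite product through the Hausdorff-measure estimate), which changes nothing of substance.
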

\begin{proof}
The lower bound estimation can be deduced from Theorem \ref{JR thm} given by Jordan and Rams \cite{JR}, where the dimension of the set of points  with strictly increasing digits in general $d$-decaying iIFS is obtained. In what follows, it suffices to give the upper bound estimation for $\dim_{\rm H}E_1$.

\medskip

Fix $M\in\mathbb{N}$. For any $x\in E_1$, there exists $N\in\mathbb{N}$, such that $a_n(x)\geq M$ for each $n>N$. Then
\begin{align*}
E_1&\subseteq\bigcap\limits_{M=1}^{\infty}\bigcup\limits_{N=1}^{\infty}\big\{x\in(0,1):\ a_n(x)\geq M,\ \forall\ n>N\big\}\\
&\subseteq\bigcap\limits_{M=1}^{\infty}\bigcup\limits_{N=1}^{\infty}\bigcup\limits_{a_1,\cdots,a_n\in\mathbb{N}}F_M(N),
\end{align*}
where for each $a_1,\ldots,a_n\in\mathbb{N}$, we write
\[
F_M(N):=\big\{x\in(0,1):\ a_k(x)=a_k,\ 1\leq k\leq N,\ a_n(x)\geq M,\ \forall\ n>N\big\}.
\]
Clearly, the monotonicity and countable stability properties of the Hausdorff dimension (see \cite[p. 32]{Fal90}) imply that
\[
\dim_{\rm H}E_1\leq\sup_{N\geq1}\{\dim_{\rm H}F_{M}(N)\}.
\]
Thus for any $s>1/d$, we have
\begin{align*}
\mathcal{H}^{s}(F_M(N))&\leq\liminf\limits_{n\to\infty}\sum\limits_{a_{k}\geq M,\ N<k\leq n}|I_{n}(a_1, \ldots, a_n)|^s\\
&\leq\liminf\limits_{n\to\infty}\sum\limits_{a_{k}\geq M,\ N<k\leq n}\prod\limits_{k=1}^n\left(\frac{C_2}{a_k^d}\right)^s\\
&\leq\prod\limits_{k=1}^N\left(\frac{C_2}{a_k^d}\right)^s\cdot\liminf\limits_{n\to\infty}\left(\sum\limits_{b\geq M}\frac{C_2^s}{b^{ds}}\right)^{n-N}.
\end{align*}
By the choice of $s$, we can choose $M_0$ sufficiently large ensuring that
\[
\sum\limits_{b\geq M_0}\frac{C_2^s}{b^{ds}}\leq1.
\]
Hence when $M\geq M_0$, we have
\[
\mathcal{H}^{s}(F_M(N))\leq\prod\limits_{k=1}^N\left(\frac{C_2}{a_k^d}\right)^s<\infty,
\]
which means, when $M\geq M_0$,
\[
\dim_{\rm H}F_M(N)\leq\frac{1}{d}
\]
and this establishes the result.
\end{proof}

\medskip

The following lemma is concerned with the Hausdorff dimension of the set of points whose product of consecutive digits tends to infinity without any restriction in $d$-decaying Gauss like iIFS. Let
\[E_2=\big\{x\in(0,1):\ a_n(x)\cdots a_{n+m}(x)\to\infty\ \text{as}\ n\to\infty\big\}.\]
\begin{lem}\label{LC}
For any $m\geq1$, we have $\dim_{\rm H}E_2=1/d$.
\end{lem}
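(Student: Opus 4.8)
The plan is to prove the lemma by reducing it to the case already settled in Lemma \ref{GT}. The key observation is that the set $E_2$, defined by the divergence of products of consecutive digits, should have the same Hausdorff dimension $1/d$ as the set $E_1$, where the individual digits tend to infinity. For the lower bound, I would note that if $a_n(x)\to\infty$, then certainly $a_n(x)\cdots a_{n+m}(x)\to\infty$, so that $E_1\subseteq E_2$; combined with Lemma \ref{GT} this immediately gives $\dim_{\rm H}E_2\geq\dim_{\rm H}E_1=1/d$. The entire difficulty therefore lies in establishing the matching upper bound $\dim_{\rm H}E_2\leq 1/d$.

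For the upper bound, I would mimic the covering argument used in the proof of Lemma \ref{GT}. Fixing $M\in\mathbb{N}$, every $x\in E_2$ satisfies $a_n(x)\cdots a_{n+m}(x)\geq M$ for all sufficiently large $n$, so I can write $E_2$ as a countable union (over the prescribed initial block and the threshold index $N$) of sets of the form
\[
\widetilde{F}_M(N):=\big\{x\in(0,1):\ a_k(x)=a_k,\ 1\leq k\leq N,\ a_n(x)\cdots a_{n+m}(x)\geq M,\ \forall\ n>N\big\}.
\]
By monotonicity and countable stability of Hausdorff dimension, it again suffices to bound $\dim_{\rm H}\widetilde{F}_M(N)$. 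For $s>1/d$ I would cover $\widetilde{F}_M(N)$ by cylinders $I_n(a_1,\ldots,a_n)$ and use \eqref{lens} to estimate $|I_n|^s\leq\prod_{k=1}^n(C_2/a_k^d)^s$, thereby reducing the Hausdorff measure bound to controlling a sum of the form $\sum\prod_k a_k^{-ds}$ taken over all admissible digit strings respecting the constraint $a_n\cdots a_{n+m}\geq M$ for every $n>N$.

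The main obstacle, and the genuinely new feature compared to Lemma \ref{GT}, is that the constraint couples blocks of $m+1$ consecutive digits rather than constraining each digit individually, so the sum over admissible strings no longer factorizes cleanly into identical single-digit tails. The plan to handle this is to group the digit indices into consecutive blocks of length $m+1$ and observe that within each block the product of its digits is at least $M$; summing $\prod a_k^{-ds}$ over one block subject to $a_{j}\cdots a_{j+m}\geq M$ produces a quantity that can be made at most $1$ once $M=M_0$ is large (for fixed $s>1/d$), using the convergence of $\sum_b b^{-ds}$ and the fact that requiring the product of $m+1$ terms to exceed $M$ forces at least one factor to exceed $M^{1/(m+1)}$, which is large. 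More precisely, I would bound the per-block sum by comparing it to $\big(\sum_{b\geq 1}C_2^s b^{-ds}\big)^{m+1}$ restricted by the product condition, and show this restricted sum tends to $0$ as $M\to\infty$; raising it to the power equal to the number of blocks then yields a finite (indeed arbitrarily small) Hausdorff measure. This gives $\dim_{\rm H}\widetilde{F}_M(N)\leq 1/d$ for $M\geq M_0$, and hence $\dim_{\rm H}E_2\leq 1/d$, completing the proof.
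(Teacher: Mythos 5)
Your proposal is correct, and while the lower bound (via $E_1\subseteq E_2$ and Lemma \ref{GT}) coincides with the paper's, your upper bound takes a genuinely different route. The paper first invokes Good's lemma to reduce to the single set $E_M(1)$, then introduces a family of Bernoulli measures $\mu_t$ with $\mu_t(I_n(a_1,\ldots,a_n))=\prod_{j\le n}a_j^{-t}/\zeta(t)^n$ and exploits the global inequality $\sum_{i\le n}\log a_i\ge\frac{n-m}{m+1}\log M$ on admissible strings (which follows because the product of the windows $a_k\cdots a_{k+m}$ over $1\le k\le n-m$ is at most $(a_1\cdots a_n)^{m+1}$) to absorb the exponential constants $C_2^{ns}\zeta(t)^n$ into a factor $M^{-\varepsilon(n-m)/(m+1)}$, thereby bounding $\mathcal{H}^s(E_M(1))$ by the total mass of $\mu_t$, which is $1$. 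You instead stay entirely within the covering framework of Lemma \ref{GT}: you partition the indices beyond $N$ into disjoint blocks of length $m+1$, retain only the product constraint at each block's starting index, and factorize the covering sum into per-block sums $S_M=\sum_{b_0\cdots b_m\ge M}\prod_{k}C_2^s b_k^{-ds}$, which tend to $0$ as $M\to\infty$ because $\sum_b b^{-ds}<\infty$ for $ds>1$ and the constraint forces some factor to exceed $M^{1/(m+1)}$. Both arguments rest on the same underlying fact --- the window constraint forces $a_1\cdots a_n$ to grow like $M^{n/(m+1)}$ --- but yours implements it locally and dispenses with both the auxiliary measures and the appeal to Good's lemma, so it is somewhat more elementary; the paper's measure-theoretic formulation is the one that generalizes more naturally to pressure-type computations. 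One small point to make explicit when writing yours up: when covering $\widetilde{F}_M(N)$ by level-$n$ cylinders, restrict to $n$ with $n-N$ divisible by $m+1$ so the blocks tile exactly; this is harmless since the $\liminf$ defining the Hausdorff measure may be evaluated along that subsequence.
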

\begin{proof}
It is obvious that $E_1\subseteq E_2$, and thus by Lemma \ref{GT},
\[\dim_{\rm H}E_2\geq\dim_{\rm H}E_1=1/d.\]
To bound $\dim_{\rm H}E_2$ from upper, we divide the proof into three steps. First, for any $M\geq1$, we deduce that
\begin{align*}
E_2&\subseteq\big\{x\in(0,1):\ a_n(x)\cdots a_{n+m}(x)\geq M,\ \text{for}\ n\ \text{ultimately}\big\}\\
&=\bigcup_{N\geq1}\big\{x\in(0,1):\ a_n(x)\cdots a_{n+m}(x)\geq M,\ \forall\ n\geq N\big\}:=\bigcup_{N\geq1}E_{M}(N).
\end{align*}
From a result of Good \cite[Lemma 1]{Good41}, we know that for any $N\geq1$, $\dim_{\rm H}E_{M}(N)=\dim_{\rm H}E_{M}(1)$. Then we have
\[\dim_{\rm H}E_2\leq\sup_{N\geq1}\{\dim_{\rm H}E_{M}(N)\}.\]
 Thus it only needs to estimate the upper bound of $\dim_{\rm H}E_{M}(1)$. Let
\[\mathcal{C}_{n}(M)=\big\{(a_{1},\ldots,a_{n})\in\mathbb{N}^{n}:\ a_k(x)\cdots a_{k+m}(x)\geq M,\ 1\leq k\leq n-m\big\}.\]
Then we have
\begin{equation}\label{bgg}
E_{M}(1)\subseteq\bigcap_{n\geq1}\bigcup_{(a_{1},\ldots,a_{n})\in \mathcal{C}_{n}(M)}I_n(a_1,\ldots,a_n).
\end{equation}
Secondly, for any $t>1$ and $(a_{1},\ldots,a_{n})\in\mathbb{N}^{n}$, we define a family of probability measures $\{\mu_t\}_{t>1}$ on $n$th-level cylinder $I_n(a_1,\ldots,a_n)$ such that
\begin{equation}\label{mut}
\mu_{t}(I_n(a_1,\ldots,a_n))=e^{-n\log\zeta(t)-t\sum_{1\leq j\leq n}\log a_j},
\end{equation}
where $\zeta(t)=\sum_{k\geq1}k^{-t}$. It is easy to verify that
\[\sum_{(a_{1},\ldots,a_{n})\in\mathbb{N}^{n}}\mu_{t}(I_n(a_1,\ldots,a_n))=1\]
and
\[\sum_{a_{n+1}\in\mathbb{N}}\mu_{t}(I_{n+1}(a_1,\ldots,a_n,a_{n+1}))=\mu_{t}(I_n(a_1,\ldots,a_n)).\]
Thus by Hahn-Kolmogorov extension theorem, we know that the probability measures defined by \eqref{mut}
can be extended on $[0,1]$. Thirdly, for any $\varepsilon>0$, choose $s=(t+\varepsilon)/d$ and $M$ large enough such that
\begin{equation}\label{tj}
n\frac{t+\varepsilon}{d}\log C_2+n\log\zeta(t)<\varepsilon\frac{n-m}{m+1}\log M.
\end{equation}
Now it turns to estimate the $s$-dimensional Hausdorff measure of $E_M(1)$. Notice that for any $(a_{1},\ldots,a_{n})\in \mathcal{C}_{n}(M)$,
\[\sum_{1\leq i\leq n}\log a_i=\frac{1}{m+1}\log(a_1\cdots a_n)^{m+1}\geq\frac{n-m}{m+1}\log M,\]
which, in combination with \eqref{lens}, \eqref{bgg}, \eqref{mut} and \eqref{tj}, implies that
\begin{align*}
\mathcal{H}^{s}(E_M(1))&\leq\liminf\limits_{n\to\infty}\sum\limits_{(a_{1},\ldots,a_{n})\in \mathcal{C}_{n}(M)}|I_{n}(a_1, \ldots, a_n)|^s\\
&\leq\liminf\limits_{n\to\infty}\sum\limits_{\substack{(a_1,\ldots,a_n)\in\mathbb{N}^n\\a_ka_{k+1}\cdots a_{k+m}\geq M\\1\leq k\leq n-m}}e^{\frac{n(t+\varepsilon)\log C_2}{d}-\varepsilon\sum_{1\leq i\leq n}\log a_i-t\sum_{1\leq i\leq n}\log a_i}\\
&\leq\liminf\limits_{n\to\infty}\sum\limits_{(a_1,\ldots,a_n)\in\mathbb{N}^n}e^{\frac{n(t+\varepsilon)\log C_2}{d}-\varepsilon\frac{n-m}{m+1}\log M-t\sum_{1\leq i\leq n}\log a_i}\\
&\leq\liminf\limits_{n\to\infty}\sum\limits_{(a_1,\ldots,a_n)\in\mathbb{N}^n}e^{-n\log\zeta(t)-t\sum_{1\leq i\leq n}\log a_i}\\
&=\liminf\limits_{n\to\infty}\sum\limits_{(a_1,\ldots,a_n)\in\mathbb{N}^n}\mu_t(I_n(a_1,\ldots,a_n))=1.
\end{align*}
This shows that $\dim_{\rm H}E_{M}(1)\leq s$. Letting $t\to1$ and $\varepsilon\to0$, we obtain the desired upper bound of $\dim_{\rm H}E_2$, i.e.,
\[\dim_{\rm H}E_2\leq\dim_{\rm H}E_{M}(1)\leq\frac{1}{d}.\]
\end{proof}
\indent For the case when sets of points whose digits tend to infinity with various growth rates, their Hausdorff dimensions could be determined by the following lemma.
\begin{lem}[{\cite[Lemma 2.3]{LR21}}]\label{sbk}
 Let $\{s_n\}_{n\geq1}, \{r_n\}_{n\geq1}$ be two sequences of positive real numbers and for any $N\geq1$, let
\[B(\{s_n\},\{r_n\},N)=\big\{x\in(0,1):\ s_n-r_n\leq a_n(x)\leq s_n+r_n,\ \forall\ n\geq N\big\}.\]
If $s_n>r_n$ for any $n\geq1$, $s_n,r_n\to\infty$ as $n\to\infty$, and \[\liminf\limits_{n\to\infty}\frac{s_n-r_n}{s_n}>0.\]
Then we have
\[\dim_{\rm H}B(\{s_n\},\{r_n\},N)=\liminf\limits_{n\to\infty}\frac{\sum_{1\leq i\leq n}\log r_i}
{d\sum_{1\leq i\leq n+1}\log s_i-\log r_{n+1}}.\]
\end{lem}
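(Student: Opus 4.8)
The plan is to establish matching upper and lower bounds for $\dim_{\rm H}B(\{s_n\},\{r_n\},N)$, writing $s^\ast:=\liminf_{n\to\infty}\frac{\sum_{1\le i\le n}\log r_i}{d\sum_{1\le i\le n+1}\log s_i-\log r_{n+1}}$ for the target value. Throughout I would abbreviate $L_n:=\sum_{i=1}^n\log r_i$ and $S_n:=\sum_{i=1}^n\log s_i$, and set $D_i:=\{k\in\mathbb N:\ s_i-r_i\le k\le s_i+r_i\}$, the admissible digit set at place $i$. Since $r_i\to\infty$ one has $\#D_i\asymp r_i$, and the hypothesis $\liminf(s_i-r_i)/s_i>0$ forces $k\asymp s_i$ for every $k\in D_i$, so by \eqref{lens} every admissible cylinder satisfies $|I_n|\asymp\prod_{i=1}^n s_i^{-d}=e^{-dS_n+O(n)}$. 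Because $s_i,r_i\to\infty$ we have $S_n/n,\ L_n/n\to\infty$, so any multiplicative constant of the form $e^{O(n)}$ (arising from $C_1,C_2,\kappa$, from comparing $\#D_i$ with $r_i$, and from $k\asymp s_i$) is negligible against $S_n$ and $L_n$ and can be discarded on passing to logarithmic ratios. As finitely many initial digits do not affect the Hausdorff dimension, I would ignore the free prefix $a_1,\dots,a_{N-1}$ and work with $E:=\{x:\ a_i(x)\in D_i\ \forall i\ge N\}$.

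The geometric fact that pins down the exact shape of the formula is the size of the ``block'' swept out by the admissible $(n+1)$-st digits inside a fixed admissible cylinder $I_n(\omega)$. By the Gauss-like ordering (iv) the cylinders $I_{n+1}(\omega,k)$, $k\in D_{n+1}$, are consecutive subintervals of $I_n(\omega)$, and since $\bigcup_i f_i([0,1])=[0,1]$ they tile their convex hull up to endpoints; using Theorem \ref{dcds} (the conformal measure equals $\mathcal L$) together with the bounded distortion property (v), the length of this convex hull satisfies $\ell_n(\omega)\asymp|I_n(\omega)|\cdot r_{n+1}s_{n+1}^{-d}\asymp r_{n+1}\prod_{i=1}^{n+1}s_i^{-d}$. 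For the upper bound I would cover $E$, for each $n$, by these convex hulls: there are $\prod_{i=N}^n\#D_i=e^{L_n+O(n)}$ of them, each of diameter $e^{\log r_{n+1}-dS_{n+1}+O(n)}$, so at exponent $s$ the Hausdorff sum equals $e^{L_n-s(dS_{n+1}-\log r_{n+1})+O(n)}$. Choosing $n$ along a subsequence realising the liminf, this sum tends to $0$ for every $s>s^\ast$; as the diameters tend to $0$, this yields $\dim_{\rm H}B\le s^\ast$.

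For the lower bound I would place on $E$ the uniform Bernoulli-type measure $\mu$ determined by $\mu(I_n(\omega))=\prod_{i=N}^n(\#D_i)^{-1}=e^{-L_n+O(n)}$ for admissible $\omega$ (a consistent pre-measure, hence extendable by the Hahn--Kolmogorov theorem), and apply the mass distribution principle. The work is to bound $\mu(B(x,\varrho))$ from above for $x\in E$ as $\varrho\to0$. The relevant scales lie in the chain $|I_{n+1}|<\ell_n<|I_n|$: for $\varrho\in[|I_{n+1}|,\ell_n]$ the tiling shows $B(x,\varrho)$ meets $\asymp \varrho/|I_{n+1}|$ admissible $(n+1)$-cylinders, so $\mu(B(x,\varrho))\asymp\frac{\varrho}{|I_{n+1}|}\cdot\frac{\mu(I_n)}{\#D_{n+1}}$, while for $\varrho\in[\ell_n,|I_n|]$ the whole mass of $I_n$ lies in an interval of length $\ell_n\le\varrho$ and a ball of radius $\le|I_n|$ meets only $O(1)$ cylinders $I_n$, so $\mu(B(x,\varrho))\asymp\mu(I_n)$. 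In each regime $\log\mu(B(x,\varrho))/\log\varrho$ is monotone in $\log\varrho$, and a direct computation shows its minimum over the whole block is attained at $\varrho=\ell_n$ with value exactly $\frac{L_n}{dS_{n+1}-\log r_{n+1}}$; hence $\liminf_{\varrho\to0}\frac{\log\mu(B(x,\varrho))}{\log\varrho}\ge s^\ast$ for every $x\in E$, and the mass distribution principle gives $\dim_{\rm H}B\ge\dim_{\rm H}E\ge s^\ast$.

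The main obstacle is the lower bound, and within it the ball--cylinder estimate in the intermediate regime $\varrho\in[|I_{n+1}|,|I_n|]$: one must control how many admissible sub-cylinders a ball of given radius can meet and verify that the mass-carrying parts of neighbouring cylinders are genuinely separated (here $\ell_n/|I_n|\asymp r_{n+1}s_{n+1}^{-d}\le s_{n+1}^{1-d}\to0$ provides the needed gap). This is exactly where the Gauss-like ordering (iv), the identification of the conformal measure with Lebesgue (Theorem \ref{dcds}), and the bounded distortion property (v) are indispensable. By contrast, the appearance of the correction term $-\log r_{n+1}$ is explained cleanly once one isolates the critical scale $\ell_n$ and checks that the coarser covering by convex hulls beats the naive covering by individual cylinders; indeed $\frac{L_n}{dS_{n+1}-\log r_{n+1}}\le\frac{L_{n+1}}{dS_{n+1}}$, the latter being the exponent of the naive covering.
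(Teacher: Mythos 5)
The paper offers no proof of Lemma \ref{sbk}: it is quoted from \cite[Lemma 2.3]{LR21}, and the only argument supplied in this paper is the remark after the statement that the value of $N$ is immaterial. Your proposal must therefore be compared with the cited source, and it reconstructs essentially that argument: for the upper bound, covering by the contiguous blocks of admissible $(n+1)$-cylinders inside each admissible $n$-cylinder (which is exactly what produces the correction term $-\log r_{n+1}$ in the denominator), and for the lower bound, the uniform Bernoulli measure on admissible cylinders combined with the mass distribution principle at the critical scale $\ell_n\asymp|I_n|\,r_{n+1}s_{n+1}^{-d}$; the outline and the regime analysis are correct, and the $e^{O(n)}$ bookkeeping is indeed harmless because $\sum_{i\leq n}\log s_i$ grows superlinearly. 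Two small points of precision: the separation of the mass of neighbouring cylinders does not follow from $\ell_n/|I_n|\to0$ alone (a short block could a priori sit flush against the boundary of $I_n(\omega)$), but from the position of the block inside its parent --- the gap is $\asymp|I_n|s_{n+1}^{1-d}\gtrsim\ell_n$ on the side of digits exceeding $s_{n+1}+r_{n+1}$ and $\asymp|I_n|$ on the side of digits below $s_{n+1}-r_{n+1}$, the latter using $s_n-r_n\to\infty$; and Theorem \ref{dcds} is not needed anywhere in your argument, since \eqref{lens}, the ordering in condition (iv) and the bounded distortion property already yield the block-length estimate.
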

We note that
\[
\dim_{\rm H}B(\{s_{n}\},\{r_{n}\},N)=\dim_{\rm H}B(\{s_{n}\},\{r_{n}\},1).
\]
To see this, it suffices to notice that the dimensional formula in Lemma \ref{sbk} does not depend on a finite number of initial terms of the sequences $\{s_n\}$ and $\{r_n\}$. Besides, the set $B(\{s_{n}\},\{r_{n}\},N)$ can be represented as a countable union of bi-Lipschitz images of $B(\{s_{n+N-1}\},\{r_{n+N-1}\},1)$, and it is known that bi-Lipschitz maps always preserve the Hausdorff dimension.

\medskip

To end this section, we present a combinatorial formula for calculating the cardinality of some finite sets of points whose digits are non-decreasing.
\begin{lem}[{\cite[Lemma 2.5]{FMSY24}}]\label{card}
For any positive integers $\ell$ and $n$, we have
\[
\sharp\big\{(a_1,\ldots,a_n)\in\mathbb{N}^{n}: 1\leq a_1\leq\cdots \leq a_n\leq\ell\big\}=\frac{(n+\ell-1)!}{n!(\ell-1)!}.
\]
\end{lem}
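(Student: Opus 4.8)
The plan is to prove this classical stars-and-bars count by a single shift bijection that turns the non-decreasing tuples into strictly increasing ones, which are simply subsets and therefore trivial to count. Write
\[
A_{n,\ell}=\big\{(a_1,\ldots,a_n)\in\N^{n}:\ 1\leq a_1\leq\cdots\leq a_n\leq\ell\big\}
\]
for the set whose cardinality we want, and introduce the auxiliary set of strictly increasing tuples
\[
B_{n,\ell}=\big\{(b_1,\ldots,b_n)\in\N^{n}:\ 1\leq b_1<b_2<\cdots<b_n\leq \ell+n-1\big\}.
\]

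First I would define $\Phi\colon A_{n,\ell}\to B_{n,\ell}$ by $\Phi(a_1,\ldots,a_n)=(b_1,\ldots,b_n)$ with $b_i=a_i+(i-1)$. The role of the shift is to convert weak inequalities into strict ones: from $a_i\leq a_{i+1}$ one gets $b_i=a_i+(i-1)<a_{i+1}+i=b_{i+1}$, while the endpoint conditions give $b_1=a_1\geq1$ and $b_n=a_n+(n-1)\leq\ell+n-1$, so $\Phi$ indeed lands in $B_{n,\ell}$. The inverse is the explicit map $a_i=b_i-(i-1)$: subtracting the shift relaxes the strict inequalities back to weak ones and restores the range $1\leq a_i\leq\ell$, and one checks directly that the two assignments compose to the identity in either order. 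Hence $\Phi$ is a bijection and $\sharp A_{n,\ell}=\sharp B_{n,\ell}$.

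The second step is to count $B_{n,\ell}$. A strictly increasing $n$-tuple is determined by, and conversely determines, its underlying $n$-element subset of $\{1,\ldots,\ell+n-1\}$, so
\[
\sharp B_{n,\ell}=\binom{\ell+n-1}{n}=\frac{(n+\ell-1)!}{n!\,(\ell-1)!},
\]
which is the claimed value.

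Since this is a standard combinatorial identity, I do not anticipate any genuine obstacle; the only point requiring a little care is to confirm that $\Phi$ is a bona fide bijection rather than merely an injection, i.e.\ that the shifted tuple really satisfies the strict-inequality and range constraints defining $B_{n,\ell}$ and that the inverse respects the constraints defining $A_{n,\ell}$. As an independent check I could instead argue by induction: splitting $A_{n,\ell}$ according to whether $a_n<\ell$ or $a_n=\ell$ yields the recursion $\sharp A_{n,\ell}=\sharp A_{n,\ell-1}+\sharp A_{n-1,\ell}$, which is precisely Pascal's rule for $\binom{\ell+n-1}{n}$, and the base cases $\sharp A_{n,1}=1$ and $\sharp A_{1,\ell}=\ell$ agree with the formula.
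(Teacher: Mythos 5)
Your proof is correct: the shift bijection $b_i=a_i+(i-1)$ is verified in both directions (including the point that $b_i\geq i$ guarantees $a_i\geq 1$ under the inverse), and the count $\binom{\ell+n-1}{n}=\frac{(n+\ell-1)!}{n!(\ell-1)!}$ follows. Note that the paper does not prove this lemma at all — it imports it verbatim as \cite[Lemma 2.5]{FMSY24} — so there is no in-paper argument to compare against; what you have written is the standard stars-and-bars argument and would serve as a self-contained substitute for the citation.
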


\smallskip

\section{Proofs of the main results}
This section is devoted to the proofs of the main results. We will divide them into three parts.
\subsection{Proof of Theorem \ref{dg}}
Let $0\leq\alpha<\infty$. From \eqref{t1}, we know that for any $x\in E(\alpha)$ and $\varepsilon>0$, $\sum_{n=1}^{\infty}(a_n(x))^{-(\alpha+\varepsilon)} <\infty$. Then we have
$E(\alpha)\subseteq E_1$, and thus by Lemma \ref{GT}, \[\dim_{\rm H}E(\alpha)\leq\dim_{\rm H}E_1=1/d.\] To bound $\dim_{\rm H}E(\alpha)$ from below, we need to construct suitable Cantor-type subset of $\dim_{\rm H}E(\alpha)$ according $\alpha=0$ and $\alpha\in(0,\infty)$.
\begin{enumerate}[(i)]
\item For the case $\alpha=0$, let
\[\{s_n\}_{n\geq1}=\{2e^{n}\}_{n\geq1}\ \text{and}\  \{r_n\}_{n\geq1}=\{e^{n}\}_{n\geq1}.\]
Then it is easy to check that
\begin{equation}\label{Bzj}
B(\{s_n\},\{r_n\},1)\subseteq E(0).
\end{equation}
Appying Lemma \ref{sbk}, we immediately obtain that
\[\dim_{\rm H}E(0)\geq \dim_{\rm H}B(\{s_n\},\{r_n\},1)=\liminf\limits_{n\to\infty}\frac{\sum_{1\leq i\leq n}\log r_i}
{d\sum_{1\leq i\leq n+1}\log s_i-\log r_{n+1}}=\frac{1}{d}.\]
\item For the case $\alpha\in(0,\infty)$, let
\[\{s_n\}_{n\geq1}=\{2n^{1/\alpha}\}_{n\geq1}\ \text{and}\  \{r_n\}_{n\geq1}=\{n^{1/\alpha}\}_{n\geq1}.\]
Then by Lemma \ref{sbk} again, we have
    \[\dim_{\rm H}E(\alpha)\geq \dim_{\rm H}B(\{s_n\},\{r_n\},1)=\frac{1}{d}.\]
\end{enumerate}

\medskip

\subsection{Proof of Theorem \ref{ddg}} Recall that
\[
E(\alpha,\{t_i\}_{0\leq i\leq m}):=\left\{x\in(0,1):\ \tau_2(x)=\alpha\right\}, \ \ 0\leq\alpha<\infty.
\]
 With the same method used for estmating the upper bound of $\dim_{\rm H}E(\alpha)$ in the proof of Theorem \ref{dg}, we have
\begin{align*}
E(\alpha,\{t_i\}_{0\leq i\leq m})
&\subseteq\big\{x\in(0,1):\ a^{t_0}_n(x)\cdots a^{t_m}_{n+m}(x)\to\infty\ \text{as}\ n\to\infty\big\}\\
&\subseteq\big\{x\in(0,1):\ a_n(x)\cdots a_{n+m}(x)\to\infty\ \text{as}\ n\to\infty\big\}=E_2.
\end{align*}
It follows from Lemma \ref{LC} that
\[\dim_{\rm H}E(\alpha,\{t_i\}_{0\leq i\leq m})\leq\dim_{\rm H}E_2\leq\frac{1}{d}.\]
For the lower bound of $\dim_{\rm H}E(\alpha,\{t_i\}_{0\leq i\leq m})$, we divide the proof into two parts. It is worth pointing out that the subset defined in \eqref{Bzj} is also suitable for $E(0,\{t_i\}_{0\leq i\leq m})$, and then $\dim_{\rm H}E(0,\{t_i\}_{0\leq i\leq m})\geq1/d$.
Thus it remains to construct a subset of $E(\alpha,\{t_i\}_{0\leq i\leq m})$ for any $\alpha\in(0,\infty)$. Let
\[
\{s_n\}_{n\geq1}=\left\{2n^{\frac{1}{\alpha\Sigma_t}}\right\}_{n\geq1}\ \text{and}\ \{r_n\}_{n\geq1}=\left\{n^{\frac{1}{\alpha\Sigma_t}}\right\}_{n\geq1}.
\]
Then the definition of the convergence exponent $\tau_2(x)$ in \eqref{t2} shows that
\[B(\{s_n\},\{r_n\},1)\subseteq E(\alpha,\{t_i\}_{0\leq i\leq m}).\] We conclude from Lemma \ref{sbk} that
\[\dim_{\rm H}E(\alpha,\{t_i\}_{0\leq i\leq m})\geq\dim_{\rm H}B(\{s_n\},\{r_n\},1)=\frac{1}{d}.\]

\medskip

\subsection{Proof of Theorem \ref{jqdg}} In the following we shall deal with the proofs of Theorem \ref{jqdg} into three cases. Recall that
\[E(\Lambda,\alpha,\{t_i\}_{0\leq i\leq m})=\left\{x\in\Lambda: \liminf\limits_{n\to\infty}\frac{\log\left(a_n^{t_0}(x)\cdots a_{n+m}^{t_m}(x)\right)}{\log n}=\alpha\right\}.\]
\subsubsection{\textbf{Case} $0\leq\alpha<\Sigma_t$} For any $s\in(0,\Sigma_t-\alpha)$, let
\[
\mathcal{D}_{k}(\alpha,\{t_i\}_{0\leq i\leq m}):=\left\{(a_1, \ldots, a_{k})\in\mathbb{N}^{k}: 1\leq a_1\leq\cdots\leq a_k\leq k^{\frac{\alpha+s}{\Sigma_t}}\right\},
\]
then
\begin{align}\label{fg1}
\nonumber E(\Lambda,\alpha,\{t_i\}_{0\leq i\leq m})&\subseteq\bigcap\limits_{n\geq1}\bigcup\limits_{k\geq n}\Big\{x\in\Lambda: a^{t_0}_k(x)\cdots a^{t_m}_{k+m}(x)\leq k^{\alpha+s}\Big\}\\
\nonumber&\subseteq\bigcap\limits_{n\geq1}\bigcup\limits_{k\geq n}\Big\{x\in\Lambda: a_k(x)\leq k^{\frac{\alpha+s}{\Sigma_t}}\Big\}\\
&=\bigcap\limits_{n\geq1}\bigcup\limits_{k\geq n}\bigcup\limits_{(a_1, \ldots, a_{k})\in\mathcal{D}_{k}(\alpha,\{t_i\}_{0\leq i\leq m})}I_k(a_1, \ldots, a_{k}).
\end{align}
Next our aim is to show that $\mathcal{H}^{s}(E(\Lambda,\alpha,\{t_i\}_{0\leq i\leq m}))=0$. For this purpose,
we shall estimate the cardinality of the set $\mathcal{D}_{k}(\alpha,\{t_i\}_{0\leq i\leq m})$ and the length of
$k$th-level cylinder $I_k(a_1,\ldots,a_k)$ for any $x\in\mathcal{D}_{k}(\alpha,\{t_i\}_{0\leq i\leq m})$.
By Lemma \ref{card},
\begin{align}\label{ges1}
\nonumber\sharp\mathcal{D}_{k}(\alpha,\{t_i\}_{0\leq i\leq m})
&=\frac{(k+\lfloor k^{\frac{\alpha+s}{\Sigma_t}}\rfloor-1)!}{k!(\lfloor k^{\frac{\alpha+s}{\Sigma_t}}\rfloor-1)!}\\
\nonumber&\leq(k+1)\cdot(k+2)\cdots(k+\lfloor k^{\frac{\alpha+s}{\Sigma_t}}\rfloor-1)\\
&\leq(k+k^{\frac{\alpha+s}{\Sigma_t}})^{k^{\frac{\alpha+s}{\Sigma_t}}}\leq e^{(1+\log k)k^{\frac{\alpha+s}{\Sigma_t}}}.
\end{align}
On the other hand, for any $x\in\mathcal{D}_{k}(\alpha,\{t_i\}_{0\leq i\leq m})$, there exsits $k_0$ such that for any $k>k_0$, we have $a_k\geq C_2+1$. Then it follows from \eqref{lens} that
\begin{equation}\label{cdgj1}
|I_k(a_1,\ldots,a_k)|\leq(C_2)^k\prod_{1\leq i\leq k}a_i^{-d}\leq(C_2+1)^{dk_0}\Big(\frac{C_2}{(C_2+1)^d}\Big)^k.
\end{equation}
Together with \eqref{fg1}, \eqref{ges1} and \eqref{cdgj1}, we deduce that
\begin{align*}
&\ \mathcal{H}^{s}(E(\Lambda,\alpha,\{t_i\}_{0\leq i\leq m}))\\
&\leq\liminf\limits_{n\to\infty}\sum\limits_{k\geq n}\sum\limits_{(a_1, \ldots, a_{k})\in\mathcal{D}_{k}(\alpha,\{t_i\}_{0\leq i\leq m})}|I_{k}(a_1, \ldots, a_k)|^s\\
&\leq\liminf\limits_{n\to\infty}\sum\limits_{k\geq n}\big(\sharp\mathcal{D}_{k}(\alpha,\{t_i\}_{0\leq i\leq m})\cdot|I_{k}(a_1, \ldots, a_k)|^s\big)\\
&\leq(C_2+1)^{sdk_0}\liminf\limits_{n\to\infty}\sum\limits_{k\geq n}e^{(1+\log k)k^{\frac{\alpha+s}{\Sigma_t}}}\cdot \Big(\frac{C_2}{(C_2+1)^d}\Big)^{ks}=0.
\end{align*}
\begin{rem}\label{45}
Let $0\leq\alpha<\Sigma_t$. Without significant modifications, we also have
\[
\dim_{\rm H}\left\{x\in\Lambda: \liminf\limits_{n\to\infty}\frac{\log\left(a_n^{t_0}(x)\cdots a_{n+m}^{t_m}(x)\right)}{\log n}\leq\alpha\right\}=0.
\]
\end{rem}

\subsubsection{\textbf{Case} $\Sigma_{t}\leq\alpha<\infty$} To bound $\dim_{\rm H}E(\Lambda,\alpha,\{t_i\}_{0\leq i\leq m})$ from upper, our strategy is to find a natural cover by using the construction of $E(\Lambda,\alpha,\{t_i\}_{0\leq i\leq m})$, while for the lower bound of $\dim_{\rm H}E(\Lambda,\alpha,\{t_i\}_{0\leq i\leq m})$, we need to consruct a Cantor-type subset of $E(\Lambda,\alpha,\{t_i\}_{0\leq i\leq m})$.

\medskip

\textbf{Upper bound:}
Let $0<\varepsilon<\alpha$.
 Given a point $x\in E(\Lambda,\alpha,\{t_i\}_{0\leq i\leq m})$, we know that $x\in\Lambda$,
and there exists $N\geq1$ such that $a^{t_0}_j(x)\cdots a^{t_m}_{j+m}(x)\geq j^{\alpha-\varepsilon}$
for any $j\geq N$ and $a^{t_0}_k(x)\cdots a^{t_m}_{k+m}(x)\leq k^{\alpha+\varepsilon}$ for infinitely many $k$'s. Let $B_{N}(\alpha,\{t_i\}_{0\leq i\leq m})$ be the set
\[
\bigcap\limits_{n\geq N}\bigcup\limits_{k\geq n}
\left\{x\in\Lambda: a_{j}(x)\geq (j-m)^{\frac{\alpha-\varepsilon}{\Sigma_t}},
a_k(x)\leq k^{\frac{\alpha+\varepsilon}{\Sigma_t}},\ \forall\ N\leq j\leq k\right\}.
\]
Then we have \[E(\Lambda,\alpha,\{t_i\}_{0\leq i\leq m})\subseteq\bigcup_{N\geq1}B_{N}(\alpha,\{t_i\}_{0\leq i\leq m}).\]
Thus it follows that
\begin{equation}\label{wsb}
\dim_{\rm H}E(\Lambda,\alpha,\{t_i\}_{0\leq i\leq m})\leq\sup\limits_{N\geq1}\Big\{\dim_{\rm H}B_{N}(\alpha,\{t_i\}_{0\leq i\leq m})\Big\}.
\end{equation}
Next we only estimate the upper bound of $\dim_{\rm H}B_{1}(\alpha,\{t_i\}_{0\leq i\leq m})$,
since the other cases are similar. Denoted by
\begin{equation}\label{b1}
B_{1}(\alpha,\{t_i\}_{0\leq i\leq m})=\bigcap\limits_{n\geq 1}\bigcup\limits_{k\geq n}\bigcup\limits_{(a_1, \ldots, a_{k})\in\widetilde{\mathcal{D}_{k}}(\alpha,\{t_i\}_{0\leq i\leq m})}I_k(a_1, \ldots, a_{k}),
\end{equation}
where
\begin{align*}
\widetilde{\mathcal{D}_{k}}(\alpha,\{t_i\}_{0\leq i\leq m})=\Big\{(a_1, \ldots, a_{k})\in\mathbb{N}^{k}:&\ 1\leq a_1\leq \cdots\leq a_k\leq k^{\frac{\alpha+\varepsilon}{\Sigma_t}},\\
&a_{j}(x)\geq (j-m)^{\frac{\alpha-\varepsilon}{\Sigma_t}},\ \forall\ 1\leq j\leq k\Big\}.
\end{align*}
For any $(a_1, \ldots, a_{k})\in\widetilde{\mathcal{D}_{k}}(\alpha,\{t_i\}_{0\leq i\leq m})$, the Stirling formula:
\[\sqrt{2\pi}n^{n+\frac{1}{2}}e^{-n}\leq n!\leq en^{n+\frac{1}{2}}e^{-n}\]
and Lemma \ref{card} deduce that
\begin{align}\label{ges2}
\nonumber\sharp\widetilde{\mathcal{D}_{k}}(\alpha,\{t_i\}_{0\leq i\leq m})&\leq\frac{(k+\lfloor k^{\frac{\alpha+\varepsilon}{\Sigma_t}}\rfloor-1)!}{k!(\lfloor k^{\frac{\alpha+\varepsilon}{\Sigma_t}}\rfloor-1)!}\\
\nonumber&=\frac{\lfloor k^{\frac{\alpha+\varepsilon}{\Sigma_t}}\rfloor\cdot(\lfloor k^{\frac{\alpha+\varepsilon}{\Sigma_t}}\rfloor+1)\cdots(\lfloor k^{\frac{\alpha+\varepsilon}{\Sigma_t}}\rfloor+k-1)}{k!}\\
\nonumber&\leq\frac{k^{k\cdot\frac{\alpha+\varepsilon}{\Sigma_t}}}{k!}
\Big(1+\frac{1}{k^\frac{\alpha+\varepsilon}{\Sigma_t}}
\Big)\cdots\Big(1+\frac{k-1}{k^\frac{\alpha+\varepsilon}{\Sigma_t}}\Big)\\
&\leq2^{k}\cdot e^{k\frac{\alpha+\varepsilon}{\Sigma_t}}\cdot(k!)^{\frac{\alpha+\varepsilon}{\Sigma_t}-1}.
\end{align}
In view of \eqref{lens}, we have
\begin{equation}\label{ik2}
|I_k(a_1, \ldots, a_k)| \leq (C_2)^k \Big(\prod^k_{i=1}a_i\Big)^{-d}\leq(C_2)^k\big((k-m)!\big)^{-d\cdot\frac{\alpha-\varepsilon}{\Sigma_t}}.
\end{equation}
Taking
\[ds\cdot\frac{\alpha-\varepsilon}{\Sigma_t}=\frac{\alpha+\varepsilon}{\Sigma_t}-1+\varepsilon,\]
we conclude from \eqref{b1}, \eqref{ges2} and \eqref{ik2} that
\begin{align*}
\mathcal{H}^{s}(B_{1}(\alpha,\{t_i\}_{0\leq i\leq m}))&\leq\liminf\limits_{n\to\infty}\sum\limits_{k\geq n}\sum\limits_{(a_1, \ldots, a_{k})\in\widetilde{\mathcal{D}_{k}}(\alpha,\{t_i\}_{0\leq i\leq m})}|I_{k}(a_1, \ldots, a_k)|^s\\
&\leq\liminf\limits_{n\to\infty}\sum\limits_{k\geq n}\Big(\sharp\widetilde{\mathcal{D}_{k}}(\alpha,\{t_i\}_{0\leq i\leq m})\cdot|I_{k}(a_1, \ldots, a_k)|^s\Big)\\
&\leq\liminf\limits_{n\to\infty}\sum\limits_{k\geq n}\frac{(2C^s_2)^{k}\cdot e^{k\frac{\alpha+\varepsilon}{\Sigma_t}}\cdot(k^m)^{\frac{\alpha+\varepsilon}{\Sigma_t}-1}}
{\big((k-m)!\big)^\varepsilon}=0,
\end{align*}
which implies that
\[\dim_{\rm H} B_{1}(\alpha,\{t_i\}_{0\leq i\leq m})
\leq s=\frac{\alpha+\varepsilon+(-1+\varepsilon)\Sigma_t}{d(\alpha-\varepsilon)}.\]
Letting $\varepsilon\rightarrow0^+$, we deduce from \eqref{wsb} that
\[\dim_{\rm H}E(\Lambda,\alpha,\{t_i\}_{0\leq i\leq m})\leq\dim_{\rm H} B_{1}(\alpha,\{t_i\}_{0\leq i\leq m})\leq\frac{\alpha-\Sigma_t}{d\alpha}.\]

\begin{rem}\label{55}
Let $\Sigma_t\leq\alpha_1\leq\alpha_2<\infty$ and let
\[
E(\Lambda,\alpha_1,\alpha_2,\{t_i\}_{0\leq i\leq m})
=\left\{x\in\Lambda: \alpha_1\leq\liminf\limits_{n\to\infty}\frac{\log\left(a_n^{t_0}(x)\cdots a_{n+m}^{t_m}(x)\right)}{\log n}\leq\alpha_2\right\}.
\]
With the same method for estimating the upper bound of $\dim_{\rm H}E(\Lambda,\alpha,\{t_i\}_{0\leq i\leq m})$, we can change the parameter $\alpha$ to $\alpha_1,\alpha_2$ in the corresponding places in \eqref{b1}, \eqref{ges2} and \eqref{ik2}, and then show that
\begin{equation}\label{pha}
\dim_{\rm H}E(\Lambda,\alpha_1,\alpha_2,\{t_i\}_{0\leq i\leq m})\leq\frac{\alpha_2-\Sigma_t}{d\alpha_1},
\end{equation}
\end{rem}

\medskip

\textbf{Lower bound:}  In what follows, we always assume that $\alpha>\Sigma_t$ from the upper bound estimate. Let
\[\{s_n\}_{n\geq1}=\{(2n+1)n^{\frac{\alpha}{\Sigma_t}-1}\}_{n\geq1}\ \text{and}\ \{r_n\}_{n\geq1}=\{ n^{\frac{\alpha}{\Sigma_t}-1}\}_{n\geq1}.\]
Then we claim that
\begin{equation}\label{zj2}
B(\{s_n\},\{r_n\},1)\subseteq E(\Lambda,\alpha,\{t_i\}_{0\leq i\leq m}).
\end{equation}
By \eqref{zj2} and Lemma \ref{sbk}, we have
\begin{align*}
\dim_{\rm H}E(\Lambda,\alpha,\{t_i\}_{0\leq i\leq m})&\geq\dim_{\rm H}B(\{s_n\},\{r_n\},1)\\
&=\liminf\limits_{n\to\infty}\frac{\sum_{1\leq i\leq n}\log r_i}{d\sum_{1\leq i\leq n+1}\log s_i-\log r_{n+1}}\\
&=\frac{\alpha-\Sigma_t}{d\alpha}.
\end{align*}

\medskip

\subsubsection{\textbf{Case} $\alpha=\infty$}
In this case, we easily obtain that
\[
\dim_{\rm H}E(\Lambda,\infty,\{t_i\}_{0\leq i\leq m})\leq\dim_{\rm H}\Lambda=\frac{1}{d}.
\]
For the lower bound $\dim_{\rm H}E(\Lambda,\infty,\{t_i\}_{0\leq i\leq m})$, we choose
\[
\{s_n\}_{n\geq1}=\{(2n+1)e^n\}_{n\geq1}\ \text{and}\ \{r_n\}_{n\geq1}=\{e^n\}_{n\geq1}.
\]
Then it is clear that
\begin{equation}\label{wq}
B(\{s_n\},\{r_n\},1)\subseteq E(\Lambda,\infty,\{t_i\}_{0\leq i\leq m}).
\end{equation}
By Lemma \ref{sbk} again,
\[
\dim_{\rm H}E(\Lambda,\infty,\{t_i\}_{0\leq i\leq m})\geq\dim_{\rm H}B(\{s_n\},\{r_n\},1)
=\frac{1}{d}.
\]
\medskip

\subsection{Proof of Theorem \ref{supp}}
For any $0\leq\alpha\leq\infty$, it is clear that
\[F(\Lambda,\alpha,\{t_i\}_{0\leq i\leq m})\subseteq E(\Lambda,\alpha,\{t_i\}_{0\leq i\leq m}).\]
Then from the results in Theorem \ref{jqdg}, we obtain the desired upper bound of $\dim_{\rm H}F(\Lambda,\alpha,\{t_i\}_{0\leq i\leq m})$. For the lower bound of $\dim_{\rm H}F(\Lambda,\alpha,\{t_i\}_{0\leq i\leq m})$ and $\dim_{\rm H}G(\Lambda,\alpha,\{t_i\}_{0\leq i\leq m})$, we remark that the sets $B(\{s_n\},\{r_n\},1)$, constructed in \eqref{zj2} and \eqref{wq} for $\Sigma_{t}<\alpha<\infty$ and $\alpha=\infty$ respectively, also satisfy
\[B(\{s_n\},\{r_n\},1)\subseteq F(\Lambda,\alpha,\{t_i\}_{0\leq i\leq m})\subseteq G(\Lambda,\alpha,\{t_i\}_{0\leq i\leq m}).\]
The following is to estimate the upper bound of $\dim_{\rm H}G(\Lambda,\alpha,\{t_i\}_{0\leq i\leq m})$. We divide the proof into two cases. For the case $\alpha=\infty$,
\[\dim_{\rm H}G(\Lambda,\infty,\{t_i\}_{0\leq i\leq m})\leq\dim_{\rm H}\Lambda=\frac{1}{d}.\]
For the case $0\leq\alpha<\infty$, we remark that
\begin{equation}\label{zzj}
\ G(\Lambda,\alpha,\{t_i\}_{0\leq i\leq m})
\subseteq\left\{x\in\Lambda: \liminf\limits_{n\to\infty}\frac{\log\left(a_n^{t_0}(x)\cdots a_{n+m}^{t_m}(x)\right)}{\log n}\leq\alpha\right\}.
\end{equation}
Notice that the set on the right-hand side of \eqref{zzj} can be represented as
\[\left\{x\in\Lambda: \liminf\limits_{n\to\infty}\frac{\log\left(a_n^{t_0}(x)\cdots a_{n+m}^{t_m}(x)\right)}{\log n}<\Sigma_t\right\}\bigcup E(\Lambda,\Sigma_t,\alpha,\{t_i\}_{0\leq i\leq m}),\]
where the set $E(\Lambda,\Sigma_t,\alpha,\{t_i\}_{0\leq i\leq m})$ is defined as in Remark \ref{55}. It is worth pointing out that
\begin{align*}
&\ \left\{x\in\Lambda: \liminf\limits_{n\to\infty}\frac{\log\left(a_n^{t_0}(x)\cdots a_{n+m}^{t_m}(x)\right)}{\log n}<\Sigma_t\right\}\\
&=\bigcup_{\ell\geq1}\left\{x\in\Lambda: \liminf\limits_{n\to\infty}\frac{\log\left(a_n^{t_0}(x)\cdots a_{n+m}^{t_m}(x)\right)}{\log n}\leq\Sigma_t-\frac{1}{\ell}\right\}.
\end{align*}
Then by Remark \ref{45} and \eqref{zzj}, it is sufficient to show the upper bound of $\dim_{\rm H}E(\Lambda,\Sigma_t,\alpha,\{t_i\}_{0\leq i\leq m})$. By \eqref{pha}, we assume that
$\Sigma_t<\alpha<\infty$.
\begin{lem}\label{Key}
For any $\Sigma_t<\alpha<\infty$, we have
\begin{equation*}
\dim_{\rm H}E(\Lambda,\Sigma_t,\alpha,\{t_i\}_{0\leq i\leq m})\leq
\frac{\alpha-\Sigma_t}{d\alpha}.
\end{equation*}
\end{lem}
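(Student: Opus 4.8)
The plan is to deduce Lemma \ref{Key} from the interval estimate \eqref{pha} of Remark \ref{55} by slicing the admissible range of the liminf into short subintervals. The naive route---applying \eqref{pha} once with $\alpha_1=\Sigma_t$ and $\alpha_2=\alpha$---only yields the bound $\frac{\alpha-\Sigma_t}{d\Sigma_t}$, which is strictly larger than the claimed value $\frac{\alpha-\Sigma_t}{d\alpha}$ whenever $\alpha>\Sigma_t$. The reason is that a single application forces us to use the crude lower bound on the digits of the form $a_j\geq(j-m)^{(\Sigma_t-\varepsilon)/\Sigma_t}$, coming only from $\liminf\geq\Sigma_t$, whereas a point whose liminf equals some intermediate value $\beta\in[\Sigma_t,\alpha]$ in fact has much larger digits, governed by the exponent $(\beta-\varepsilon)/\Sigma_t$. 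Partitioning the range is what recovers this gain locally.

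Concretely, I would fix a small $\eta>0$ and choose $\Sigma_t=\alpha_0<\alpha_1<\cdots<\alpha_L=\alpha$ with $\alpha_i-\alpha_{i-1}\leq\eta$. If $x\in E(\Lambda,\Sigma_t,\alpha,\{t_i\}_{0\leq i\leq m})$, then its liminf lies in some $[\alpha_{i-1},\alpha_i]$, so
\[
E(\Lambda,\Sigma_t,\alpha,\{t_i\}_{0\leq i\leq m})\subseteq\bigcup_{i=1}^{L}E(\Lambda,\alpha_{i-1},\alpha_i,\{t_i\}_{0\leq i\leq m}).
\]
Since $\Sigma_t\leq\alpha_{i-1}\leq\alpha_i<\infty$ for every $i$, each summand falls under Remark \ref{55}, and the finite stability of Hausdorff dimension gives
\[
\dim_{\rm H}E(\Lambda,\Sigma_t,\alpha,\{t_i\}_{0\leq i\leq m})\leq\max_{1\leq i\leq L}\frac{\alpha_i-\Sigma_t}{d\alpha_{i-1}}.
\]

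It then remains to estimate this maximum. Writing $g(\beta)=\frac{\beta-\Sigma_t}{d\beta}=\frac{1}{d}-\frac{\Sigma_t}{d\beta}$, which is increasing on $[\Sigma_t,\infty)$, one has for each $i$
\[
\frac{\alpha_i-\Sigma_t}{d\alpha_{i-1}}=g(\alpha_{i-1})+\frac{\alpha_i-\alpha_{i-1}}{d\alpha_{i-1}}\leq g(\alpha)+\frac{\eta}{d\Sigma_t},
\]
using $\alpha_{i-1}\leq\alpha$ together with $\alpha_{i-1}\geq\Sigma_t$. Hence the maximum is at most $\frac{\alpha-\Sigma_t}{d\alpha}+\frac{\eta}{d\Sigma_t}$, and letting $\eta\to0^+$ finishes the proof. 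The only genuine obstacle is the conceptual one isolated above---realising that \eqref{pha} must be applied on a fine partition rather than once---after which the argument is a short computation resting on the monotonicity of $g$; the sole point needing care is that the first slice $[\Sigma_t,\alpha_1]$, where \eqref{pha} degenerates to $\frac{\alpha_1-\Sigma_t}{d\Sigma_t}\leq\frac{\eta}{d\Sigma_t}$, also remains negligible, so that it does not spoil the limit.
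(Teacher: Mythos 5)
Your proposal is correct and follows essentially the same route as the paper: both decompose $E(\Lambda,\Sigma_t,\alpha,\{t_i\}_{0\leq i\leq m})$ into finitely many sets $E(\Lambda,\alpha_{i-1},\alpha_i,\{t_i\}_{0\leq i\leq m})$ over a fine partition of $[\Sigma_t,\alpha]$, apply the bound \eqref{pha} from Remark \ref{55} to each piece, and let the mesh tend to zero. The only cosmetic difference is that the paper uses an equipartition into $n$ pieces and identifies the maximum via monotonicity of $f(k)$ before sending $n\to\infty$, whereas you bound each term directly by $g(\alpha)+\eta/(d\Sigma_t)$ and send $\eta\to0^+$.
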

\begin{proof}
For any positive integer $n$ such that
\begin{equation}\label{n5}
n>\frac{\alpha-\Sigma_t}{\Sigma_t},
\end{equation}
we have
\begin{align*}
&\ E(\Lambda,\Sigma_t,\alpha,\{t_i\}_{0\leq i\leq m})\\
&=\bigcup_{0\leq k\leq n-1}E\Big(\Lambda,\Sigma_t+\frac{k}{n}(\alpha-\Sigma_t),\Sigma_t+\frac{k+1}{n}(\alpha-\Sigma_t),\{t_i\}_{0\leq i\leq m}\Big),
\end{align*}
which, in combination with Remark \ref{55}, implies that
\begin{align}\label{ws4}
\nonumber&\ \dim_{\rm H}E(\Lambda,\Sigma_t,\alpha,\{t_i\}_{0\leq i\leq m})\\
\nonumber&=\max_{0\leq k\leq n-1}\dim_{\rm H}E\Big(\Lambda,\Sigma_t+\frac{k}{n}(\alpha-\Sigma_t),\Sigma_t+\frac{k+1}{n}(\alpha-\Sigma_t),\{t_i\}_{0\leq i\leq m}\Big)\\
&\leq\max_{0\leq k\leq n-1}\Big\{\frac{(k+1)(\alpha-\Sigma_t)}{d((n-k)\Sigma_t+k\alpha)}\Big\}.
\end{align}
Let
\[f(k)=\frac{(k+1)(\alpha-\Sigma_t)}{d((n-k)\Sigma_t+k\alpha)},\ \ 0\leq k\leq n-1.\]
Then by \eqref{n5}, the function $f(k)$ is increasing on the interval $[0,n-1]$. Thus, we conclude from \eqref{ws4} that
\[\dim_{\rm H}E(\Lambda,\Sigma_t,\alpha,\{t_i\}_{0\leq i\leq m})\leq f(n-1)=\frac{n(\alpha-\Sigma_t)}{d\Sigma_t+(n-1)d\alpha}.\]
By letting $n\to\infty$, we obtain the desired upper bound.
\end{proof}

\bigskip

{\bf Acknowledgement:}
The authors are grateful to Professor Lingmin Liao for helpful discussions. The research is supported by the National Natural Science Foundation of China (Nos.\, 12101191, 12201207, 12371072).

\bigskip

\end{document}